\title[Pointwise-definable end extensions]{Every countable model of arithmetic or set theory has a pointwise-definable end extension}
\author{Joel David Hamkins}
\address[Joel David Hamkins]
{O'Hara Professor of Logic, University of Notre Dame, 100 Malloy Hall, Notre Dame, IN 46556 USA \&\ Associate Faculty Member, Professor of Logic, Faculty of Philosophy, University of Oxford, UK}
\email{jdhamkins@nd.edu}
\urladdr{http://jdh.hamkins.org}
\thanks{This article was the subject of my talk at the Salzburgiense Concilium Omnibus Philosophis Analyticis (SOPhiA 2022) in the workshop, ``Reflecting on ten years in the set-theoretic multiverse.'' I am grateful to the organizers and participants, and also to Victoria Gitman, Roman Kossak, and Ali Enayat for helpful comments. Commentary can be made about this article on my blog at \href{http://jdh.hamkins.org/pointwise-definable-end-extensions}{http://jdh.hamkins.org/pointwise-definable-end-extensions}.}
\newtheorem{theorem}{Theorem}
\newtheorem*{theorem*}{Theorem}
\newtheorem{maintheorem}[theorem]{Main Theorem}
\newtheorem*{maintheorem*}{Main Theorem}
\newtheorem*{maintheorems*}{Main Theorems}
\newtheorem{corollary}[theorem]{Corollary}
\newtheorem*{corollary*}{Corollary}
\newtheorem*{corollaries*}{Corollaries}
\theoremstyle{definition}
\newtheorem*{definition*}{Definition}
\newtheorem*{question*}{Question}
\newtheorem*{questions*}{Questions}
\newtheorem*{mainquestion*}{Main Question} 
\newtheorem*{openquestion*}{Open Question} 
\theoremstyle{remark}
\newcommand{\QED}{\end{proof}}
\def\proclaim[#1]{{\bf #1}}
\def\BF#1.{{\bf #1.}}
\def\says#1:#2\par{\item[#1] #2\par}
\newcommand{\Godel}{G\"odel}
\newcommand{\N}{{\mathbb N}}
\newcommand{\dotminus}{\mathbin{\text{\@dotminus}}}
\newcommand{\@dotminus}{%
  \ooalign{\hidewidth\raise1ex\hbox{.}\hidewidth\cr$\m@th-$\cr}%
}
\newcommand{\of}{\subseteq}
\newcommand{\elesub}{\prec}
\newcommand{\satisfies}{\models}
\DeclareMathOperator{\possible}{\text{\tikz[scale=.6ex/1cm,baseline=-.6ex,rotate=45,line width=.1ex]{\draw (-1,-1) rectangle (1,1);}}}
\DeclareMathOperator{\necessary}{\text{\tikz[scale=.6ex/1cm,baseline=-.6ex,line width=.1ex]{\draw (-1,-1) rectangle (1,1);}}}
\newcommand\dbrace{\hskip-1.5em\raise3pt\hbox{\rotatebox[origin=c]{-35}{$\left.\strut^{\phantom{|}}\right\}$}}}
\newcommand\UParroW{{\setbox0\hbox{$\Uparrow$}\rlap{\hbox to \wd0{\hss$\mid$\hss}}\box0}}
\newcommand{\theoryf}[1]{{\rm #1}}
\newcommand{\concat}{\mathbin{{}^\smallfrown}}
\renewcommand{\setminus}{\raise.3ex\hbox{\rotatebox{-20}{$-$}}} 
\renewcommand{\emptyset}{\varnothing}
\newcommand{\Union}{\bigcup}
\newcommand{\smalllt}{\mathrel{\mathchoice{\raise2pt\hbox{$\scriptstyle<$}}{\raise1pt\hbox{$\scriptstyle<$}}{\raise0pt\hbox{$\scriptscriptstyle<$}}{\scriptscriptstyle<}}}
\newcommand{\smallleq}{\mathrel{\mathchoice{\raise2pt\hbox{$\scriptstyle\leq$}}{\raise1pt\hbox{$\scriptstyle\leq$}}{\raise1pt\hbox{$\scriptscriptstyle\leq$}}{\scriptscriptstyle\leq}}}
   \def\DHLhksqrt#1#2{%
   \setbox0=\hbox{$#1\sqrt{#2\,}$}\dimen0=\ht0
   \advance\dimen0-0.2\ht0
   \setbox2=\hbox{\vrule height\ht0 depth -\dimen0}%
   {\box0\lower0.4pt\box2}}
\def\[#1]{\mathopen{\lbrack\!\lbrack}#1\mathclose{\rbrack\!\rbrack}}
\newbox\gnBoxA
\newbox\gnBoxB
\newdimen\gnCornerHgt
\newdimen\gnArgHgt
\def\gcode #1{%
\setbox\gnBoxA=\hbox{$#1$}%
\setbox\gnBoxB=\hbox{$\bar #1$}%
\gnArgHgt=\ht\gnBoxB%
\ifnum     \gnArgHgt<\gnCornerHgt \gnArgHgt=0pt%
\else \advance \gnArgHgt by -\gnCornerHgt%
\fi \raise\gnArgHgt\hbox{\tiny$\ulcorner$} \box\gnBoxA %
\raise\gnArgHgt\hbox{\tiny$\urcorner$}}
\newcommand{\UnderTilde}[1]{{\setbox1=\hbox{$#1$}\baselineskip=0pt\vtop{\hbox{$#1$}\hbox to\wd1{\hfil$\sim$\hfil}}}{}}
\newcommand{\Undertilde}[1]{{\setbox1=\hbox{$#1$}\baselineskip=0pt\vtop{\hbox{$#1$}\hbox to\wd1{\hfil$\scriptstyle\sim$\hfil}}}{}}
\newcommand{\undertilde}[1]{{\setbox1=\hbox{$#1$}\baselineskip=0pt\vtop{\hbox{$#1$}\hbox to\wd1{\hfil$\scriptscriptstyle\sim$\hfil}}}{}}
\newcommand{\UnderdTilde}[1]{{\setbox1=\hbox{$#1$}\baselineskip=0pt\vtop{\hbox{$#1$}\hbox to\wd1{\hfil$\approx$\hfil}}}{}}
\newcommand{\Underdtilde}[1]{{\setbox1=\hbox{$#1$}\baselineskip=0pt\vtop{\hbox{$#1$}\hbox to\wd1{\hfil\scriptsize$\approx$\hfil}}}{}}
\renewcommand{\implies}{\mathrel{\rightarrow}}
\def\<#1>{\left\langle#1\right\rangle}
\newcommand\No{\mathord{\mathbb{N}^{\kern-.5pt\stackunder[.6pt]{\tiny$\mathbbm{o}$}{\tikz{\draw[very thin] (.3ex,0) rectangle (1ex,.2ex);}}}}} 
\newcommand{\ZFC}{{\rm ZFC}}
\newcommand{\ZF}{{\rm ZF}}
\newcommand{\GBC}{{\rm GBC}}
\newcommand{\GCH}{{\rm GCH}}
\newcommand{\HOD}{{\rm HOD}}
\newcommand{\PA}{{\rm PA}}
\newcommand{\cell}[1]{\boxit{\hbox to 17pt{\strut\hfil$#1$\hfil}}}
\newcommand{\head}[2]{\lower2pt\vbox{\hbox{\strut\footnotesize\it\hskip3pt#2}\boxit{\cell#1}}}
\newcommand{\boxit}[1]{\setbox4=\hbox{\kern2pt#1\kern2pt}\hbox{\vrule\vbox{\hrule\kern2pt\box4\kern2pt\hrule}\vrule}}
\newcommand{\Col}[3]{\hbox{\vbox{\baselineskip=0pt\parskip=0pt\cell#1\cell#2\cell#3}}}
\newcommand{\tapenames}{\raise 5pt\vbox to .7in{\hbox to .8in{\it\hfill input: \strut}\vfill\hbox to
.8in{\it\hfill scratch: \strut}\vfill\hbox to .8in{\it\hfill output: \strut}}}
\newcommand{\Head}[4]{\lower2pt\vbox{\hbox to25pt{\strut\footnotesize\it\hfill#4\hfill}\boxit{\Col#1#2#3}}}
\newcommand{\Dots}{\raise 5pt\vbox to .7in{\hbox{\ $\cdots$\strut}\vfill\hbox{\ $\cdots$\strut}\vfill\hbox{\
$\cdots$\strut}}}
\renewcommand{\UrlFont}{} 
\addcolon\nolinkurl{#1}}\iffieldundef{eprintclass}{}{\UrlFont{\mkbibbrackets{\thefield{eprintclass}}}}}
\addcolon\nolinkurl{#1}\iffieldundef{eprintclass}{}{\UrlFont{\mkbibbrackets{\thefield{eprintclass}}}}}}
\newcommand\ZFbar{\overline\ZF}
\newcommand\ZFCbar{\overline\ZFC}
\newcommand{\PAp}{\PA^{\scriptscriptstyle\!+}}
\begin{document}

\begin{abstract}
According to the math tea argument, there must be real numbers that we cannot describe or define, because there are uncountably many real numbers, but only countably many definitions. And yet, the existence of pointwise-definable models of set theory, in which every individual is definable without parameters, challenges this conclusion. In this article, I introduce a flexible new method for constructing pointwise-definable models of arithmetic and set theory, showing furthermore that every countable model of Zermelo-Fraenkel \ZF\ set theory and of Peano arithmetic \PA\ has a pointwise-definable end extension. In the arithmetic case, I use the universal algorithm and its $\Sigma_n$ generalizations to build a progressively elementary tower making any desired individual $a_n$ definable at each stage $n$, while preserving these definitions through to the limit model, which can thus be arranged to be pointwise definable. A similar method works in set theory, and one can moreover achieve $V=L$ in the extension or indeed any other suitable theory holding in an inner model of the original model, thereby fulfilling the resurrection phenomenon. For example, every countable model of \ZF\ with an inner model with a measurable cardinal has an end extension to a pointwise-definable model of $\ZFC+V=L[\mu]$.
\end{abstract}

\maketitle

\section{Introduction}

A model in first-order logic is \emph{pointwise definable} if every individual is definable in the model without parameters---every individual has an expressible property that only that individual has. A pointwise-definable model is thus completely determined by its theory, the set of sentences true in that model, for the existence of the definable elements and their atomic structural features, including the manner in which they relate to one another, are all to be found as particular assertions in the theory. One can thus reconstruct an isomorphic copy of the model using only the syntactic information in the theory.

One sometimes hears a certain philosophical argument, which I have called the \emph{math tea} argument (\cite{HamkinsLinetskyReitz2013:PointwiseDefinableModelsOfSetTheory}), for it might be heard in the contemplative discussions at a good math tea, namely, that there must be some real numbers that we can neither describe nor define, because there are only countably many definitions, but uncountably many real numbers. Although the argument may seem initially solid---mathematicians often readily agree with it---nevertheless the existence of pointwise-definable models of set theory presents a troublesome counterpoint, revealing the argument to be necessarily subtler than usually presented. A serious engagement with the math tea argument, in my view, leads to deep metamathematical issues concerning the nature of truth and definability.

In a pointwise-definable model of Zermelo-Fraenkel \ZFC\ set theory, after all, the real numbers form an uncountable set, since this is a theorem of \ZFC, and yet every real number, every function, every topological space, every set altogether, is uniquely characterized in such a model by some defining set-theoretical property. The existence of pointwise-definable models of set theory thus reveals that one cannot undertake the math tea argument as an ordinary mathematical argument, one formalized in \ZFC, for then it should work even in such a pointwise-definable context, but it cannot since every real number is definable in such a world. The math tea argument thus must take place outside mathematics, a metamathematical argument engaging with the ineffable nature of definability.

Let me explain exactly how the argument goes awry in pointwise-definable models. The basic issue is that although there are indeed only countably many definitions, a pointwise-definable model is not able to form the map associating each definition with the real number that it defines, and so it is unable to perform the diagonalization against this enumeration that would result in a nondefinable real number. One cannot express the concept ``definition $\varphi$ defines object $x$'' as a binary relation ranging over definitions and objects, since from this concept one could define a truth predicate, violating Tarski's theorem on the non-definability of truth. For example, sentence $\sigma$ is true if and only if the formula $\sigma\wedge\forall y\,(y\notin x)$ is a defining property of the emptyset, $x=\emptyset$. In this sense, one cannot even begin to undertake the math tea argument in the object theory of set theory. See further discussion in \cite{Hamkins.MO44102:AnalysisInFactAnalysisOfDefinableNumbers, HamkinsLinetskyReitz2013:PointwiseDefinableModelsOfSetTheory}

In light of this, I find the existence of pointwise-definable models of arithmetic and set theory to be a rich topic, both mathematically and philosophically, shedding light on fundamental aspects of definability and truth. They can be seen as particularly dramatic instances of the Skolem paradox, by which notions of countability, uncountability, and even finiteness need not be absolute between set-theoretic contexts.

There is in fact a robust phenomenon of pointwise-definable models in both arithmetic and set theory. Because \PA\ offers definable Skolem functions (take the least witness), it follows that the definable elements in any model of \PA\ will form an elementary substructure of the model, and this substructure will be pointwise definable because the same definitions work in the submodel as in the parent model. Every consistent theory extending \PA\ therefore gives rise to a (unique) pointwise-definable model, and indeed, the pointwise-definable models of \PA\ are exactly the prime models of this theory, those which embed elementarily into all other models of the same theory.

\enlargethispage{20pt}%
A similar analysis works in set theory, but one must add the axiom $V=\HOD$, of course, since every pointwise-definable model of set theory necessarily satisfies it, as one doesn't even need ordinal parameters in the definitions. Pointwise definability can thus be seen as a strong form of the global choice principle, since $V=\HOD$ is equivalent to the existence of a definable global well ordering of the universe. Just as in arithmetic, this leads again to definable Skolem functions, and so the definable elements of any model of $\ZFC+V=\HOD$ form a pointwise-definable elementary submodel. Indeed, the pointwise-definable models of set theory are exactly the prime models of this theory. The observation that there are pointwise-definable models of \Godel-Bernays set theory goes back to John Myhill \cite{Myhill1952:The-hypothesis-that-all-classes-are-nameable}.\goodbreak

Meanwhile, I find it remarkable that we can also achieve pointwise definability in outer models of a given model of set theory. In \cite{HamkinsLinetskyReitz2013:PointwiseDefinableModelsOfSetTheory}, for example, following on observations of Ali Enayat \cite{Enayat2005:ModelsOfSetTheoryWithDefinableOrdinals}, my co-authors and I proved that every countable model of \ZFC\ and indeed of \GBC\ has a pointwise-definable end extension. The method first adds a proper class $U$, in the style of \cite{Simpson1974:ForcingAndModelsOfArithmetic}, from which every individual is definable, and then forces to code $U$, as well as all the new sets added by the forcing itself, into the \GCH\ pattern. One can also accommodate arbitrary classes into the constructions, after first expanding the model to one that is \emph{principal}, meaning that there is one class from which all others are definable.

In this article, I present a flexible new method for achieving pointwise definability in covering end extensions of a given countable model $M$. Relying on tools from the universal algorithm and its generalizations, the new method furthermore enables stronger conclusions. In particular, for both arithmetic and set theory, we can realize pointwise definability not just in an end extension but in a $\Sigma_m$-elementary end extension (this requires $V=\HOD$ in the set theory case). In addition, every countable model of \ZF\ has pointwise-definable end extensions satisfying $V=L$ or indeed any desired theory holding in an inner model of $V=\HOD$.

To clarify some terminology, an \emph{end-extension} of a model $M$ of \PA\ is an extension $M\of N$ in which all new elements of $N$ are above every element of $M$. Similarly, an end extension of a model of set theory $M$ is a model $N$ of which $M$ is a submodel, such that $N$ adds no new elements to old sets, that is, $a\in^N b\in M\implies a\in M$. Such an end extension $N$ is \emph{covering} $M$, if there is a set $b\in N$ that contains every element of $M$ as an $\in^N$ element, meaning $a\in^N b$ for all $a\in M$. The extension $N$ is a \emph{top extension}, also known as a \emph{rank extension}, if all new sets of $N$ have rank exceeding every ordinal of $M$, so that $V_\alpha^M=V_\alpha^N$ for ordinals $\alpha$ in $M$.

For models of arithmetic the main theorem is the following:

\begin{maintheorem}\
   \begin{enumerate}
     \item Every countable model of \PA\ has a pointwise-definable end extension satisfying \PA.
     \item Indeed, for any particular $m$, there is a $\Sigma_m$-elementary such pointwise-definable end extension.
   \end{enumerate}
\end{maintheorem}

And for models of set theory the main theorem is the following:

\begin{maintheorem}\
  \begin{enumerate}
    \item Every countable model of \ZF\ has a pointwise-definable end extension satisfying $\ZFC+V=L$.
    \item More generally, every countable model of \ZF\ with an inner model of a c.e. theory $\ZFCbar$ that includes $\ZFC+V=\HOD$ has a pointwise-definable end extension satisfying $\ZFCbar$.
    \item Indeed, for any particular $m$, every countable model of $\ZFC+V=\HOD$ admits a pointwise-definable $\Sigma_m$-elementary end extension. When $m\geq 1$, these are top extensions.
  \end{enumerate}
\end{maintheorem}

This theorem generalizes the Barwise extension theorem and other results in \cite{Barwise1971:InfinitaryMethodsInTheModelTheoryOfSetTheoryLC69} and \cite{SuzukiWilmbers1973:Nonstandard-models-for-set-theory}. Barwise had proved that every countable model of \ZF\ admits an end-extension to a model of $\ZFC+V=L$, showing also that certain theories $\ZFCbar$ extending $\ZFC+V=\HOD$ can be obtained in the extension, if they are true in the original model. My theorem strengthens this to the resurrection claim of statement (2), which requires only that $\ZFCbar$ holds in an inner model of the original model. Barwise \cite[theorem~4.2]{Barwise1971:InfinitaryMethodsInTheModelTheoryOfSetTheoryLC69} proved that countable $\omega$-standard models admit pointwise definable extensions, and Suzuki and Wilmers \cite{SuzukiWilmbers1973:Nonstandard-models-for-set-theory} obtain this with top extensions. My theorem strengthens both these results to omit the $\omega$-standard requirement---it applies to all countable models. Barwise found the desired extensions, which furthermore satisfied the same $\Sigma_m$ theory as the original model. My theorem strengthens this to $\Sigma_m$-elementarity, allowing parameters from the ground model. When $m\geq 1$, this implies that the extension is a top extension.

Perhaps the principal contribution of this article, however, is the flexible new construction method, based on the universal algorithm. This is a totally different proof idea than what appears in the earlier results. I shall prove both of these theorems and afterwards draw some philosophical conclusions.

\section{Review of the universal algorithm}

The proof of the main theorem will rely on generalizations of the universal algorithm, provided in a remarkable theorem of W. Hugh Woodin. So before proving the main theorems, let me begin with a review of the universal algorithm theorem.\goodbreak

The universal algorithm theorem concerns a Turing machine program $e$, viewed as a computable procedure for enumerating a sequence of numbers
 $$a_0, a_1, \ldots, a_n.$$
The operation of such a computable process is arithmetically expressible---indeed, it is described by a $\Sigma_1$ formula---and so it makes sense to speak of running a given program inside any model of \PA. Even a nonstandard model of \PA\ can make perfect sense of running the program for a certain number of steps, even a nonstandard number of steps, giving output, placing another number onto the enumerated sequence, and so on. All of the usual computational features of such a computational process are arithmetically expressible and thus interpretable inside any model of \PA.

\enlargethispage{20pt}%
\begin{theorem}[\cite{Woodin2011:A-potential-subtlety-concerning-the-distinction-between-determinism-and-nondeterminism, BlanckEnayat2017:Marginalia-on-a-theorem-of-Woodin, Hamkins.blog2017:The-universal-algorithm-a-new-simple-proof-of-Woodins-theorem, Hamkins:The-modal-logic-of-arithmetic-potentialism}]\label{Theorem.Universal-algorithm} For any consistent c.e. theory $\PAp$ extending \PA, there is a Turing machine program $e$ for computably enumerating a sequence with the following properties.
  \begin{enumerate}
    \item $\PA$ proves that the sequence enumerated by $e$ is finite. And it is empty when computed in the standard model $\N$.
    \item For any model $M\satisfies\PAp$, if $s$ is the sequence enumerated by $e$ in $M$ and $t$ is any finite extension of $s$ in $M$, then there is an end extension of $M$ to a model $N\satisfies\PAp$ in which $e$ enumerates exactly $t$.
  \end{enumerate}

$$\begin{tikzpicture}[xscale=.6,scale=.6]
 \draw[thick,red,densely dotted,fill=red!5] (-2,4) -- (-3,6) -- node[below,scale=.7] {$N$} (3,6) -- (2,4);
 \draw[very thick,blue,fill=Orchid!20] (0,0) -- (-2,4) --  node[below,blue,scale=.7] {$M$} (2,4) -- cycle;
 \draw[thick,blue] (0,0) to[out=105,in=-130] (-.1,1.65) node[blue,circle,fill=blue,scale=.2,label={[scale=.7,label distance=-5pt]below right:$s$}] (s) {};
 \draw[thick,densely dotted,red] (s) to[out=50,in=-85] (.25,2.55) node[red,circle,fill=red,scale=.2,label={[scale=.7,label distance=-3pt]185:$t$}] {};
\end{tikzpicture}$$
\end{theorem}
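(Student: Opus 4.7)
The plan is to define the program $e$ via Kleene's recursion theorem, so that $e$ may refer to its own computation. Intuitively, $e$ enumerates its output sequence one element at a time; at each computational stage it searches through codes of proofs in $\PAp$, hunting for a proof of a statement of the form ``the sequence enumerated by $e$ is never equal to $t$,'' where $t$ ranges over finite sequences strictly extending $e$'s current output. Upon finding such a proof, $e$ outputs the numbers needed to bring the enumeration up to $t$, then resumes its search from the new state. That the final enumeration is $\PA$-provably finite is built into the bookkeeping of the construction.

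For clause (1), suppose toward contradiction that $e$ extends its output from some $s$ to a longer $t$ at some stage in the standard model $\N$. Then at that stage $e$ has found a $\PAp$-proof of the statement ``$e$'s enumeration is never equal to $t$.'' But this very extension in $\N$ witnesses the true $\Sigma_1$-statement that $e$'s enumeration does equal $t$ at some stage, which is therefore provable in $\PA \of \PAp$ by $\Sigma_1$-completeness, contradicting the consistency of $\PAp$. So $e$ never extends in $\N$, and the enumeration is empty.

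For clause (2), fix $M \satisfies \PAp$ with $e$-enumeration equal to $s$, and let $t \in M$ extend $s$. Let $T$ denote $\PAp \cup \{\text{``the enumeration of $e$ equals exactly $t$''}\}$, formed inside $M$. The key point is that $M \satisfies \Con(T)$: were $M$ to refute this consistency, then $M$ would possess a proof of ``the enumeration of $e$ is not $t$,'' but this is precisely the kind of proof that $e$ searches for, so $e$ inside $M$ would locate it at some (possibly nonstandard) $M$-stage and extend its output up through $t$, contradicting that the $M$-enumeration is only $s$. Now apply the arithmetized completeness theorem inside $M$ to extract a model $N$ of $T$, after augmenting $T$ with the $M$-coded atomic diagram of $M$ (whose consistency with $T$ is proved by iterating the same self-referential argument against each putative inconsistency). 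The resulting $N$ satisfies $\PAp$, end-extends $M$, and has $e$ enumerating exactly $t$.

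The principal obstacle is arranging that the completeness-theorem construction inside $M$ yields an end extension of $M$, rather than merely some model of $T$; this is handled by standard Paris-Kirby style bookkeeping, in which Henkin witnesses are placed into a $\PA$-definable cut disjoint from the image of $M$, so that the canonical embedding of $M$ into $N$ is initial. Together these steps deliver the required end extension.
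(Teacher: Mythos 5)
The paper states this theorem only as a citation (it is Woodin's universal algorithm; the paper's own echo of the argument appears in set-theoretic form as process~$A$ in the proof of theorem~\ref{Theorem.Sigma_m-universal-ordinals}), so I am comparing your proposal with that standard proof. Your outline has the right shape---Kleene recursion, a self-referential proof search, $\Sigma_1$-completeness plus consistency for emptiness in $\N$, arithmetized completeness for the extension---but you have waved your hands at exactly the two places where the content lies. The first is provable finiteness. You say it is ``built into the bookkeeping,'' but with the search you describe---hunting for proofs from the full theory $\PAp$ and extending whenever one is found---clause~(1) is simply false: in a model of $\PAp+\neg\Con(\PAp)$ the program finds a (nonstandard) proof of everything and extends its output at every stage, so that model thinks the sequence is infinite. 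The essential device, visible in the paper's process~$A$ as the descending numbers $k_n$, is that stage $n$ may use only the first $k_n$ axioms of $\PAp$ for some $k_n$ strictly below all earlier $k_i$: the strict descent is what makes $\PA$ prove finiteness, the forced nonstandardness of the $k_n$ gives emptiness in $\N$, and the freedom to pick a nonstandard $k$ below all earlier $k_i$ is what makes the extension step yield a model of the true $\PAp$. Relatedly, the sentence you refute (``the enumeration is never equal to $t$'') does not match the theory whose consistency clause~(2) requires (``the enumeration is \emph{exactly} $t$,'' i.e.\ reaches $t$ and never passes it); the standard formulation refutes ``$t$ is not added at stage $n$ as the next and last block,'' which lines up precisely.

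The second gap is the end-extension mechanism. Placing ``Henkin witnesses into a $\PA$-definable cut disjoint from the image of $M$'' is not a real argument, and is backwards besides: the new elements of an end extension must sit \emph{above} all of $M$, not in a cut. The actual mechanism is a lemma: any model $N$ of $\PA$ that is definable inside $M\satisfies\PA$ and believed by $M$ to satisfy $\PA$ automatically contains $M$ as an initial segment via the numeral embedding $a\mapsto\underline{a}^{N}$, since $M$ has a (possibly nonstandard) proof that $\PA\proves\forall x\,\bigl(x<\underline{a}\to\bigvee_{b<a}x=\underline{b}\bigr)$ and $M$ can verify the soundness of its own provability predicate over its definable models. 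No augmentation by the atomic diagram and no cut construction is needed. With the descending-fragment device and this lemma supplied, your outline does assemble into the standard proof; without them, it does not establish either clause.
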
\goodbreak

\newpage
In brief, the program defines a sequence with the \emph{universal extension property} in models of \PA. The history of the universal algorithm result involves several instances of independent rediscovery of closely related or essentially similar results, alternative arguments and generalizations. Woodin's original theorem appears in~\cite{Woodin2011:A-potential-subtlety-concerning-the-distinction-between-determinism-and-nondeterminism}, with further results by Blanck and Enayat~\cite{BlanckEnayat2017:Marginalia-on-a-theorem-of-Woodin, Blanck2017:Dissertation:Contributions-to-the-metamathematics-of-arithmetic}, including, importantly, the extension of the result from countable to arbitrary models. I provided a simplified proof in \cite{Hamkins:The-modal-logic-of-arithmetic-potentialism, Hamkins.blog2017:The-universal-algorithm-a-new-simple-proof-of-Woodins-theorem}. Shavrukov reportedly had found a similar proof independently in 2012 (unpublished), pointing out the affinity with Berarducci-Japaridze~\cite{Japaridze1994:A-simple-proof-of-arithmetical-completeness-for-Pi11-conservativity-logic}. Visser has pointed out a similar affinity with the `refugee' argument in \cite{ArtemovBeklemishev2004:Provability-logic}. Weaker incipient forms of the theorem are due to Mostowski~\cite{Mostowski1960:A-generalization-of-the-incompleteness-theorem} and Kripke~\cite{Kripke1962:Flexible-predicates-of-formal-number-theory}.

In my article \cite[theorem~18]{Hamkins:The-modal-logic-of-arithmetic-potentialism}, I proved the following generalization of the universal algorithm to higher levels of complexity; a version is also proved independently by Blanck \cite{Blanck2017:Dissertation:Contributions-to-the-metamathematics-of-arithmetic}. Specifically, there is a $\Sigma_{m+1}$ definable finite sequence that exhibits the universal extension property in models of $\PAp$ with respect to $\Sigma_m$ elementary end extensions. The case $m=0$ is exactly the universal algorithm result of theorem \ref{Theorem.Universal-algorithm}, because a computable enumerable set is $\Sigma_1$ definable and every end extension is $\Sigma_0$ elementary.

\begin{theorem}\label{Theorem.Universal-algorithm-Sigma-m}
For any consistent c.e. theory $\PAp$ extending \PA\ and any standard-finite natural number $m$, there is an oracle Turing machine program $\tilde e$ with the following properties.
  \begin{enumerate}
    \item $\PA$ proves that the sequence enumerated by $\tilde e$, with any oracle, is finite. And it is empty when computed in the standard model $\N$ with oracle $0^{(n)}$.
    \item For any model $M$ of $\PAp$ in which $\tilde e$, using oracle $0^{(m)}$ of $M$, enumerates a finite sequence $s$, possibly nonstandard, and any $t\in M$ extending $s$, there is a $\Sigma_m$-elementary end extension of $M$ to a model $N\satisfies\PAp$ in which $\tilde e$, with oracle $0^{(m)}$ of $N$, enumerates exactly $t$.
  \end{enumerate}
\end{theorem}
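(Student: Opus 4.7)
The plan is to relativize the fixed-point construction that establishes theorem \ref{Theorem.Universal-algorithm} to complexity level $m$. Ordinary Turing machines are replaced by oracle machines with access to $0^{(m)}$, so the enumerated sequences become $\Sigma_{m+1}$-definable rather than merely c.e., and provability in $\PAp$ alone is replaced by provability in $\PAp$ together with the $\Sigma_m$-elementary diagram of the ambient model. Since $\Sigma_m$-elementary end extensions are exactly the extensions that interpret $(0^{(m)})^M$ coherently, this matches the complexity of the enumeration to the kind of extension we aim to produce.

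Using the Kleene recursion theorem, I would define $\tilde e$ to be an index for the following $0^{(m)}$-oracle computation. At stage $k$, enumerate the first $k$ candidate triples $(\sigma, t, p)$, where $\sigma$ is the current output of $\tilde e$, $t$ properly extends $\sigma$, and $p$ codes an alleged proof, from $\PAp$ augmented by the finitely many $\Sigma_m$ and $\Pi_m$ parameter facts $\tilde e$ has read from the oracle, of the statement ``$\tilde e$ with oracle $0^{(m)}$ does not enumerate exactly $t$.'' When such a triple is found, append $t$, thereby defeating the alleged proof. The recursion theorem yields an $\tilde e$ whose behavior refers to its own index in this way. Clause (1) then follows by a standard argument: in $\N$ with the true $0^{(m)}$, any extension step would require an actual proof in a true theory, and these proofs together with the self-reference would force inconsistency of $\PAp$.

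To prove clause (2), fix $M \satisfies \PAp$ and $t \in M$ extending the output $s$ of $\tilde e$ with oracle $(0^{(m)})^M$ in $M$. Working inside $M$, form the theory $T$ consisting of $\PAp$, the $\Sigma_m$-elementary diagram of $M$ (all true $\Sigma_m$ and $\Pi_m$ sentences about named elements $\bar a$ for $a \in M$), and the sentence $\sigma_t$ asserting that $\tilde e$ with oracle $0^{(m)}$ enumerates exactly $t$. The self-reference of $\tilde e$ ensures $T$ is consistent in $M$: were it inconsistent, there would be a finite portion of the $\Sigma_m$-diagram which together with $\PAp$ refutes $\sigma_t$; this very refutation would appear in $\tilde e$'s own search at some stage (the oracle $(0^{(m)})^M$ supplying the relevant $\Sigma_m$ facts), whereupon $\tilde e$ would append past $s$ to $t$, contradicting the hypothesis that it enumerated exactly $s$ in $M$. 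Applying arithmetized completeness inside $M$ to $T$ yields an $M$-definable model $N \satisfies \PAp$ realizing $M$ as a $\Sigma_m$-elementary submodel, in which $\tilde e$ with oracle $(0^{(m)})^N$ enumerates exactly $t$; $\Sigma_0$-elementarity already delivers the end-extension property.

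The main obstacle is the careful bookkeeping that reduces consistency of $T$ inside $M$ to the self-referential behavior of $\tilde e$. One must check that the oracle $(0^{(m)})^M$ accurately reflects enough of the $\Sigma_m$-diagram of $M$ for any internal refutation of $\sigma_t$ to be detected by $\tilde e$ at some stage; and one must verify that the model produced by arithmetized completeness preserves the diagram with its named constants, yielding true $\Sigma_m$-elementarity $M \elesub_{\Sigma_m} N$ rather than merely matching $\Sigma_m$-theories. Once this interaction of oracle access, self-reference, and arithmetized completeness is in place, the remainder of the argument runs parallel to the $m = 0$ case of theorem \ref{Theorem.Universal-algorithm}.
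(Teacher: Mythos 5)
First, a point of reference: the paper does not actually prove Theorem~\ref{Theorem.Universal-algorithm-Sigma-m} here; it quotes it from \cite[theorem~18]{Hamkins:The-modal-logic-of-arithmetic-potentialism} (and Blanck's dissertation), adding only a remark about emptiness in $\N$. But the known proof strategy is visible in this paper's own proof of the set-theoretic analogue (theorem~\ref{Theorem.Sigma_m-universal-ordinals}), and comparing your proposal against that reveals a genuine gap. Your overall architecture is right---relativize to the oracle $0^{(m)}$ so the sequence becomes $\Sigma_{m+1}$-definable, use Kleene recursion for self-reference, fold the $\Sigma_m\cup\Pi_m$ diagram (via numerals) into the theory so that arithmetized completeness inside $M$ yields a $\Sigma_m$-elementary end extension rather than a mere end extension. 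That part of the reduction is correct and matches the standard argument.

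The gap is that your program searches for proofs from the \emph{full} theory $\PAp$ (plus oracle facts) and extends whenever it finds one, with no mechanism forcing the process to halt. Clause (1) requires $\PA$ to \emph{prove} the sequence is finite, and your construction does not deliver this; in fact it fails concretely. Take any $M\satisfies\PAp+\neg\Con(\PAp)$ (such models exist by the second incompleteness theorem). Inside $M$ there is a proof of a contradiction from $\PAp$, hence an $M$-proof of ``the sequence is not exactly $t$'' for \emph{every} $t$; your program finds such a proof at every stage and extends forever, so the sequence is infinite in $M$. This also destroys clause (2), since the ``defeated'' proof of $\neg\sigma_t$ is un-defeated the moment the sequence is extended past $t$, so the self-defeating mechanism only has force for the provably \emph{last} element---which requires finiteness to be secured in advance. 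The missing device is the Rosser-style descending-index bookkeeping that appears throughout this family of results and prominently in the paper's processes $A$, $B$, $C$: each successful stage must commit to a natural number $k_n$ strictly below all earlier $k_i$ and use only the finite fragment $\PAp_{k_n}$, so that $\PA$ proves there are at most $k_0$ stages; one then shows each $k_n$ is nonstandard in the relevant models (whence emptiness in $\N$), and proves the extension property by choosing a nonstandard $k$ below all earlier $k_i$ and applying arithmetized completeness to $\PAp_k$ plus the $\Sigma_m\cup\Pi_m$ diagram plus $\sigma_t$, which is consistent in $M$ precisely because the next stage did not succeed. Without the descending fragments, both the finiteness claim and the consistency argument collapse.
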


In \cite{Hamkins:The-modal-logic-of-arithmetic-potentialism} it is stated that the sequence is empty in the standard model, provided that $\N\satisfies\PAp$, but in fact this extra provisional assumption is not needed for that conclusion, since if there were a successful stage in the standard model, then it would also succeed in every model of $\PAp$, which would show that every model of $\PAp$ had behavior contradictory to the proof that was found in the standard model, contradicting consistency.

\section{Pointwise-definable end extensions in arithmetic}

Using the results of the previous section, let me now prove main theorem 1, which will provide the new flexible method of constructing pointwise-definable end extensions.

\begin{theorem}[Main theorem 1]\label{Theorem.Main-theorem-1-numbered}
  Every countable model of \PA\ has a pointwise-definable end extension satisfying \PA. Indeed, for any particular $m$, there is a $\Sigma_m$-elementary such pointwise-definable end extension.
\end{theorem}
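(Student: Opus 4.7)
The plan is to construct the extension $N$ as the union of a countable chain
\[
M = M_0 \elesub_{\Sigma_m} M_1 \elesub_{\Sigma_m} M_2 \elesub_{\Sigma_m} \cdots
\]
of $\Sigma_m$-elementary end extensions, using the $\Sigma_m$ universal algorithm of Theorem~\ref{Theorem.Universal-algorithm-Sigma-m} (with $\PAp = \PA$) to pin down, at each stage, one more element as parameter-free definable from the program $\tilde e$. A standard bookkeeping device lists a sequence $(b_n)_{n<\omega}$ of targeted elements so that every element of the eventual union $N$ is hit: at stage $n$ we pick some $b_n\in M_n$ using a diagonal enumeration of pairs $(k,\ell)$ with $k\le n$ that selects the $\ell$-th element of $M_k$ along a fixed enumeration of that (countable) model.

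At stage $n$, let $s_n$ denote the finite (possibly nonstandard length) sequence currently enumerated by $\tilde e$ in $M_n$ using the oracle $0^{(m)}$ of $M_n$. Apply Theorem~\ref{Theorem.Universal-algorithm-Sigma-m} with $t = s_n\concat\langle\<n,b_n\>\rangle$ to obtain a $\Sigma_m$-elementary end extension $M_{n+1}$ of $M_n$ in which $\tilde e$ enumerates exactly $t$; the new entry is a coded pair tagging $b_n$ with the standard numeral $n$. Taking $N=\bigcup_n M_n$, the usual direct-limit facts give $N\models \PA$ together with $M \elesub_{\Sigma_m} N$ as an end extension. Because $\tilde e$ is arithmetically definable and $\Sigma_1$ facts about its output are preserved upward through end extensions, the pair $\<n,b_n\>$ continues to appear at its stage-$n$ position in the sequence enumerated by $\tilde e$ in $N$, and by the bookkeeping every element of $N$ is some $b_n$.

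The main obstacle, and the place requiring the most care, is proving genuine parameter-free definability of each $b_n$ inside $N$. The natural candidate formula is ``the unique $y$ such that $\<n,y\>$ appears in the sequence enumerated by $\tilde e$,'' which uses only the standard numeral $n$, but its uniqueness may be spoiled by the initial segment $s_0\in M$, whose possibly nonstandard-length contents we do not a priori control and which might happen to include pairs of the same form. I would address this by building in a distinguishing marker: at stage $0$ append a marker entry of a recognizable shape (for example, an iterated pair $\<0,0,\ldots,0\>$ of a specific code form chosen to lie outside $s_0$, or simply the first appended entry, recognized by a fixed-point property such as equaling its own position in the sequence), and then define $b_n$ by picking out the unique $y$ with $\<n,y\>$ appearing \emph{after} the marker. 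Once this tagging is set up, each $b_n$ is defined without parameters uniformly in $n$, in every $M_k$ with $k>n$ and hence, by $\Sigma_1$-preservation in end extensions, in $N$. This yields pointwise definability of $N$ and completes the proof of statement (2); statement (1) follows from the same construction using the basic universal algorithm of Theorem~\ref{Theorem.Universal-algorithm} and ordinary end extensions.
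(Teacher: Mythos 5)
There is a genuine gap, and it occurs at the very point where your construction diverges from the paper's: you build a chain in which every link has the \emph{same} fixed level of elementarity $\Sigma_m$ and you reuse the \emph{same} universal sequence $\tilde e$ at every stage. First, the ``usual direct-limit facts'' do not give $N\models\PA$. The union of a chain of $\Sigma_m$-elementary end extensions is $\Sigma_m$-elementary over each link, so $N$ inherits only the $\Pi_{m+1}$ consequences of $\PA$; induction for formulas of higher complexity need not survive to the union (and for your statement (1), a union of mere end extensions is even worse). This is precisely why the paper uses a \emph{progressively} elementary tower $M_0\elesub_{\Sigma_m}M_1\elesub_{\Sigma_{m+1}}M_2\elesub_{\Sigma_{m+2}}\cdots$: every axiom of $\PA$ has some fixed complexity and is therefore eventually preserved to the limit, while the whole tower is still at least $\Sigma_m$-elementary over $M_0$.

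Second, because you append a new entry to the \emph{same} $\Sigma_{m+1}$-definable sequence at each of infinitely many stages, the sequence as computed in the limit $N$ contains all the tags $\<n,b_n>$ for every standard $n$; but $\PA$ proves this sequence is finite, so in $N$ it must have some nonstandard length and therefore contains further, completely uncontrolled entries at nonstandard positions \emph{beyond} all of your tags. Your marker device guards only against stray pairs inside the initial segment $s_0$, not against this overspill tail, so ``the unique $y$ with $\<n,y>$ on the sequence'' can fail uniqueness (replacing ``unique'' by ``first occurrence after the marker'' would patch this particular point, but only if $N$ already satisfies enough induction, which is gap one again). The paper sidesteps both problems at once: at stage $n$ it switches to the \emph{fresh} $\Sigma_{n+1}$-definable universal sequence, adds exactly one element $a_n$ to it, and then never touches it again---all later extensions are at least $\Sigma_{n+1}$-elementary, so that sequence is frozen and ``the last element of the $\Sigma_{n+1}$-definable universal sequence'' is a clean parameter-free definition of $a_n$ in the limit. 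You should restructure your argument along these lines rather than with a fixed $m$ and a single reused sequence.
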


\newpage\hskip0pt
\begin{wrapfigure}{r}{.25\textwidth}\vskip2ex\hfill
\begin{tikzpicture}[xscale=.6,scale=.5]
 \draw[densely dotted,dotted] (0,0) -- (-4.5,9) -- node[below,scale=1.1] {$\vdots$} (4.5,9) -- cycle;
 \draw[very thin,blue,fill=Orchid!5] (0,0) -- (-3.5,7) --  node[below,blue,scale=.5] {$M_3$} (3.5,7) -- cycle;
 \draw[thin,blue,fill=Orchid!10] (0,0) -- (-3,6) --  node[below,blue,scale=.6] {$M_2$} (3,6) -- cycle;
 \draw[thick,blue,fill=Orchid!25] (0,0) -- (-2.5,5) --  node[below,blue,scale=.7] {$M_1$} (2.5,5) -- cycle;
 \draw[very thick,blue,fill=Orchid!40] (0,0) -- (-1.5,3) --  node[below,blue,scale=.7] {$M_0$} (1.5,3) -- cycle;
\end{tikzpicture}
\end{wrapfigure}
\hskip0pt
\begin{proof}
Fix any countable model $M_0$ of \PA. By the universal algorithm, we can find a countable end extension model $M_1$ where the last entry of the universal algorithm is any desired object $a_0$ of $M_0$. Next, by theorem \ref{Theorem.Universal-algorithm-Sigma-m} we may find a countable $\Sigma_1$-elementary end extension $M_2$, such that the last element on the universal $\Sigma_2$-sequence is any desired element $a_1$ of $M_1$. Because the extension is $\Sigma_1$-elementary, the operation of the universal algorithm is frozen---no new numbers will appear on the sequence---and so $a_0$ is still last on that sequence. And so on. At each stage $n$, we find by theorem \ref{Theorem.Universal-algorithm-Sigma-m} a countable $\Sigma_n$-elementary end extension $M_{n+1}$ of $M_n$, such that any desired $a_n$ from $M_n$ is last on the $\Sigma_{n+1}$-definable universal sequence, and this will be preserved to the later models. The result is a progressively elementary tower of models:
 $$M_0\quad\elesub_{\Sigma_0}\quad M_1\quad\elesub_{\Sigma_1}\quad M_2\quad\elesub_{\Sigma_2}\quad M_3\quad\elesub_{\Sigma_3}\quad M_4\quad\elesub_{\Sigma_4}\quad \cdots$$
Because the tower is progressively elementary, each model $M_n$ will be $\Sigma_n$ elementary in the limit model $M^*=\Union_n M_n$, which will therefore be a model of \PA. Every $a_n$ is definable in $M^*$ as the last element on the $\Sigma_{n+1}$-definable universal sequence, since this is true in $M_{n+1}$ and was preserved to all the later extensions and the limit. Therefore, by undertaking a simple bookkeeping method we can arrange that the elements $a_n$ exhaust $M^*$. We should simply arrange that every element appearing in any of the models $M_k$ is eventually made definable as one of the objects $a_n$. In this way, $M^*$ becomes pointwise definable, as desired.

To achieve a $\Sigma_m$-elementary end extension, we need simply start right in with the $\Sigma_{m+1}$-definable sequence at stage $0$, and then proceed to $\Sigma_{m+2}$, and so on.
 $$M_0\quad\elesub_{\Sigma_m}\quad M_1\quad\elesub_{\Sigma_{m+1}}\quad M_2\quad\elesub_{\Sigma_{m+2}}\quad M_3\quad\elesub_{\Sigma_{m+3}}\quad \cdots$$
Every stage in this tower will be at least $\Sigma_m$-elementary.
\end{proof}\goodbreak

When I mentioned this result to Roman Kossak, he pointed out an alternative manner of proving main theorem 1 using only classical results in the theory of models of arithmetic rather than the universal algorithm. To outline the argument briefly, one begins with any countable model $M$, forms the $\Sigma_1$ theory of $M$, and then applies the method of \cite{JensenEhrenfeucht1976:Some-problems-in-elementary-arithmetics} using what they call $B_n$ independent trees to construct a prime model of that theory with the same standard system as $M$, which by the Friedman embedding theorem \cite{Friedman1973:CountableModelsOfSetTheories} will have $M$ as an initial segment. And so $M$ has a pointwise-definable end extension.

Because the universal algorithm also works with uncountable models, the same method shows that for any model of arithmetic $M\satisfies\PA$, of any cardinality, and any countably many elements of $M$, there is for any particular $m$ a $\Sigma_m$ elementary end extension $N$ in which those elements, individually, are definable. In forthcoming work \cite{GitmanHamkins:Every-model-of-arithmetic-or-set-theory-size-at-most-continuum-has-Leibnizian-extension}, Victoria Gitman and I use this aspect of the construction to show that every model of arithmetic of size at most continuum admits a Leibnizian extension, one in which any two elements are distinguished by some expressible property. And similarly in set theory.

\section{Set-theoretic analogues of the universal sequence}

W. Hugh Woodin and I had generalized the universal algorithm result to models of set theory, providing a $\Sigma_2$-definable finite sequence with the universal extension property with respect to top extensions of countable models of set theory.

\begin{theorem}[Hamkins, Woodin\cite{HamkinsWoodin:The-universal-finite-set}]\label{Theorem.Sigma_2-universal}
There is a $\Sigma_2$ definable finite sequence of sets
$$a_0,\ a_1,\ \ldots,\ a_n$$
with the following properties:
\begin{enumerate}
  \item \ZFC\ proves that the sequence is finite, and it is empty in any transitive model of \ZFC.
  \item If $M$ is a countable model of \ZFC\ in which the sequence is $s$ and $t$ is any finite extension of $s$ in $M$, then there is a top extension of $M$ to a model $N\satisfies\ZFC$ in which $\varphi$ defines exactly $t$.
\end{enumerate}
$$\begin{tikzpicture}[xscale=.8,scale=.5]
 \draw[thick,red,densely dotted,fill=red!10] (-2,4) -- (-3,6) -- node[below,scale=.7] {$N$} (3,6) -- (2,4);
 \draw[very thick,blue,fill=Yellow!30] (0,0) -- (-2,4) --  node[below,blue,scale=.7] {$M$} (2,4) -- cycle;
 \draw[thick,blue] (0,0) to[out=105,in=-130] (0,1.5) node[blue,circle,fill=blue,scale=.2,label={[label distance=-5pt]below right:$s$}] {};
 \draw[thick,densely dotted,red] (0,1.5) to[out=50,in=-85] (.25,2.55) node[red,circle,fill=red,scale=.2,label={[label distance=-3pt]185:$t$}] {};
\end{tikzpicture}$$
\end{theorem}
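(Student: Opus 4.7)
My plan is to adapt the proof of the universal algorithm theorem (Theorem \ref{Theorem.Universal-algorithm}) to the set-theoretic setting, exploiting the fact that a statement is $\Sigma_2$ precisely when it is of the form $\exists\theta\,(V_\theta\models\psi)$ for some first-order $\psi$. This characterization plays the role of $\Sigma_1$ c.e.-enumerability in the arithmetic version, with top (rank) extensions replacing end extensions, and reflection to $V_\theta$'s playing the role of bounded Turing-machine computation. Accordingly, I would define $\varphi$ via a Rosser-style self-referential construction (fixed-point lemma): $\varphi(n,x)$ asserts there is an ordinal $\theta$ such that $V_\theta$ is a transitive model of a sufficient fragment of \ZFC\ inside which a search locates a proof that stage $n$ of the $\varphi$-sequence extends by $x$, with $(\theta,x)$ lexicographically least, so that no competing extension can be triggered at the same stage.

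For property (1), in any transitive $V\models\ZFC$ the internal search within each $V_\theta$ is absolute, so any witness $\theta$ found in $V$ is a genuine witness in $V$. The Rosser self-reference then forces a familiar consistency-violating collision: the proof inside $V_\theta$ predicting that $\varphi$ extends by $x$ at stage $n$ reflects up to a true assertion in $V$, but then---because $x$ really does enter the sequence---the complementary proof whose existence would have blocked enumeration becomes witnessed at a competing rank, undermining minimality of the original $\theta$. A set-theoretic variant of the Löb-style argument used in the arithmetic case yields that \ZFC\ proves the sequence is always finite.

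For property (2), given a countable $M\models\ZFC$ with sequence $s$ in $M$ and desired extension $t\in M$, I would construct the top extension $N$ by compactness. Form the theory $T$ in the language of \ZFC\ augmented with constants $\bar a$ for each $a\in M$ and a new constant $c$, asserting the $\Delta_0$-diagram of $M$ (so that $M$ sits inside any model as a rank-initial segment), the inequalities $\bar\alpha<c$ for every $\alpha\in\Ord^M$, the axioms of \ZFC, and the statement that the $\varphi$-sequence is exactly $t$. The crux is that every finite fragment of $T$ has a model within $M$: by $\Sigma_2$-reflection inside $M$, there are cofinally many $\theta\in\Ord^M$ with $V_\theta^M\models\ZFC$ computing the $\varphi$-sequence as $s$, and one then iterates the Rosser extension one element at a time---each new element of $t\setminus s$ is realized by passing to a yet larger rank witness that $M$ already provides. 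Compactness yields a countable model of $T$, which with $c$ interpreted above $\Ord^M$ is the desired top extension $N$.

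The main obstacle will be the precise engineering of the Rosser-style $\Sigma_2$ definition of $\varphi$, so that it simultaneously achieves the rigidity required by property (1)---where transitive absoluteness forecloses any extension---and the flexibility required by property (2)---where countable nonstandard $M$ freely admit rank witnesses in their top extensions. Balancing the absoluteness of the bounded search inside each $V_\theta$ against the non-absoluteness of the outer $\Sigma_2$ existential is the key maneuver, mirroring and strengthening the analogous tension between $\Sigma_1$-absoluteness and end-extension freedom in the arithmetic universal algorithm theorem.
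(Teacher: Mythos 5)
Your proposal correctly identifies one genuine pillar of the argument---the characterization of $\Sigma_2$ assertions as those of the form $\exists\theta\,(V_\theta\models\psi)$, together with self-reference via the \Godel--Carnap fixed-point lemma---but the two load-bearing steps are not right as written. First, the trigger condition in the actual definition is not ``a search locates a proof that stage $n$ extends by $x$'' but rather the \emph{model-theoretic} condition that some suitable $V_{\delta_n}$ has \emph{no top extension} to a model of a fragment $\ZFC_{k_n}$ (with the $k_n$ strictly descending) in which the process adds that very object at stage $n$. Finiteness in (1) then falls out of the descending $k_n$, with no \Lob-style argument needed, and emptiness in transitive models comes from a reflection argument showing every successful $k_n$ must be nonstandard. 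Second, your compactness argument for (2) does not go through: $M\models\ZFC$ gives no $\theta$ with $V_\theta^M\models\ZFC$ at all (let alone cofinally many), by the second incompleteness theorem; reflection only yields $V_\theta$'s satisfying finite fragments. The actual argument instead applies the Keisler--Morley theorem to get an elementary top extension $M^+$ of $M$, picks a suitable $\delta$ in $M^+$ above $M$, and uses the \emph{failure} of stage $n$ in $M$ to conclude that $V_\delta^{M^+}$ must admit the desired top extension realizing the new element---this is exactly why the ``no top extension'' clause must be the trigger, and it is the balancing act you flagged as ``the main obstacle'' but did not resolve.

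The larger missing idea is that the descending-$k_n$ mechanism only delivers the extension property in $\omega$-nonstandard models, where the $k_n$ can be nonstandard. For $\omega$-standard models your definition would simply have no room to fire, and the extension property would fail. The published proof (and the proof of theorem~\ref{Theorem.Sigma_m-universal-ordinals} in this paper, which generalizes the statement) therefore runs a second process for $\omega$-standard models, in which the descending data are ordinal ranks $\lambda_n$ of $\Pi^1_1$ assertions---asserting the nonexistence of the relevant top extension in the forcing extension collapsing $V_{\delta_n}$ to be countable, represented by well-founded trees---and then merges the two processes into a single uniform definition. Without this second process and the merge, property (2) is only established for $\omega$-nonstandard $M$, which is a genuine gap rather than a detail.
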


I have stated the theorem here in the sequence formulation, whereas the main theorem of \cite{HamkinsWoodin:The-universal-finite-set} is stated in terms of a universal finite set, but the two formulations are easily seen to be equivalent.

In joint work, Kameryn Williams and I provided the analogous result for a universal $\Sigma_1$ definable sequence. The natural extension concept for $\Sigma_1$ is end extension rather than top extension, since $\Sigma_1$ statements are absolute to top extensions of models of \ZF.

\begin{theorem}[Hamkins,Williams \cite{HamkinsWilliams2021:The-universal-finite-sequence}]\label{Theorem.Sigma_1-universal}
For any c.e. theory $\ZFbar$ extending \ZF, there is a $\Sigma_1$-definable finite sequence
$$a_0,\ a_1,\ \ldots,\ a_n$$
with the following properties:
\begin{enumerate}
\item $\ZF$ proves that the sequence is finite, and it is empty in any transitive model of $\ZFbar$.
\item If $M$ is a countable model of $\ZF$ with an inner model $W\satisfies\ZFbar$ and the sequence defined in $M$ is $s$, then for any finite extension $t$ in $M$, there is a covering end extension of $M$ to a model $N \models \ZFbar$ in which the sequence is exactly $t$.
\end{enumerate}
$$\begin{tikzpicture}[xscale=.8,scale=.5]
 \draw[thick,red,densely dotted,fill=red!10] (0,0) -- (-4,5) -- node[below,scale=.7] {$N$} (4,5) -- cycle;
 \draw[very thick,blue,fill=Yellow!30] (0,0) -- (-2,4) --  node[below,blue,scale=.7] {$M$} (2,4) -- cycle;
 \draw[thick,blue] (0,0) to[out=105,in=-130] (0,1.5) node[blue,circle,fill=blue,scale=.2,label={[label distance=-5pt]below right:$s$}] {};
 \draw[thick,densely dotted,red] (0,1.5) to[out=50,in=-85] (.25,2.55) node[red,circle,fill=red,scale=.2,label={[label distance=-3pt]185:$t$}] {};
\end{tikzpicture}$$
\end{theorem}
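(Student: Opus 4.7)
The plan is to build the universal $\Sigma_1$-definable finite sequence by a self-referential fixed-point construction, in the spirit of Woodin's universal algorithm but with the search for arithmetic proofs replaced by a search for ordinals and set-theoretic proof codes. Using the $\Sigma_1$ fixed-point lemma (an analogue of Kleene's recursion theorem valid in $\ZF$), I would define a $\Sigma_1$ formula $\varphi(n, x)$ asserting that $x$ is the $n$-th entry of the sequence, set up roughly as follows: there exist an ordinal $\alpha$ and a code $p \in V_\alpha$ such that $p$ witnesses a formal proof from $\ZFbar$ together with the hypothesis ``$\varphi$ defines a sequence beginning with a particular $\sigma$'' of a designated self-referential target sentence, with $x$ prescribed by $p$ and with $(\alpha, p)$ minimal in some fixed $\Sigma_1$-acceptable ordering. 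Self-reference is resolved by the fixed-point lemma.

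Next I would dispatch the two bookkeeping clauses. That $\ZF$ proves the sequence is finite follows by a reflection argument: the search for certificates is bounded at each stage and cannot perpetually succeed without contradicting the consistency underlying the design. That the sequence is empty in every transitive model of $\ZFbar$ is engineered directly into the definition: in a transitive model, any candidate $p$ would be a genuine formal $\ZFbar$-proof, and the target sentence is chosen so that any such genuine proof would itself refute $\Con(\ZFbar)$; hence the search fails and $\varphi$ enumerates nothing.

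The main work is clause (2), the universal covering-end-extension property. Given a countable $M \satisfies \ZF$ with inner model $W \satisfies \ZFbar$, the sequence $s$ defined by $\varphi$ in $M$, and a desired finite extension $t \in M$, I would apply Barwise compactness in the admissible cover of $M$. The infinitary theory to be realised consists of $\ZFbar$, the complete $\in$-diagram of $M$ expressed with fresh constants $\bar a$ for every $a \in M$, an additional constant $\bar B$ together with axioms $\bar a \in \bar B$ for all $a \in M$ (enforcing covering), and the single axiom that the sequence defined by $\varphi$ is exactly $t$. Consistency of every admissible subtheory is verified by interpretation in $W$ and by invoking the self-referential design of $\varphi$: the fixed-point construction ensures that pinning the sequence at $t$ is compatible with any finite approximation of the full diagram. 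Barwise compactness then produces a model $N$, and reading off the interpretations of the constants $\bar a$ and $\bar B$ yields the required covering end extension in which $\varphi$ enumerates precisely $t$.

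The hard part will be the coordinated design of the self-referential target sentence. It must simultaneously guarantee that $\varphi$ is genuinely $\Sigma_1$, that $N$ cannot surreptitiously extend the sequence beyond $t$, and that the infinitary theory remains consistent even after imposing the entire $\in$-diagram of $M$ on top of $\ZFbar$. Unlike the arithmetic case, one must navigate the interplay between $\Sigma_1$-witnesses present in $M$, in the inner model $W$, and new witnesses that may appear in $N$; a covering end extension can add fresh $\Sigma_1$-witnesses, and one must prevent any such new witness from unintentionally enumerating a further element of the sequence. Threading this needle, so that exactly the intended $t$ is enumerated in $N$ and no more, is where the subtle structural content of the proof resides.
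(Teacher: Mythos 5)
First, note that the paper does not prove this theorem itself; it is quoted from \cite{HamkinsWilliams2021:The-universal-finite-sequence}. The closest proof actually given in the paper is that of theorem \ref{Theorem.Sigma_m-universal-ordinals}, which exhibits the relevant machinery, so I will measure your proposal against that. Your high-level strategy---define the sequence by a \Godel--Carnap/Kleene-style fixed point and realize the extension property by Barwise compactness in the admissible cover of $M$, with the diagram of $M$, a covering constant, and an axiom pinning the sequence to $t$---is indeed the correct skeleton and matches the actual argument. But two essential ideas are missing, and a third is deferred at exactly the point where the content lies.

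First, your mechanism for finiteness and for emptiness in transitive models does not work as stated. The actual device is that each successful stage must present a certificate that is strictly \emph{smaller} than all certificates from earlier stages: in $\omega$-nonstandard models a strictly descending sequence of natural numbers $k_n$ indexing finite fragments $\ZFbar_{k_n}$, and in $\omega$-standard models a strictly descending sequence of ordinals $\lambda_n$ given by the ranks of $\Pi^1_1$ assertions (well-founded search trees). Finiteness is then immediate, and emptiness in well-founded models follows because any successful stage would force a \emph{standard} certificate, which reflection shows cannot exist (a suitable rank-initial segment or the model itself already provides the end extension whose nonexistence the certificate asserts). Your single ``minimal $(\alpha,p)$'' proof-search has no descending-certificate structure, so nothing bounds the number of successful stages, and your appeal to ``the consistency underlying the design'' is circular. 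Second, and relatedly, you make no case division between $\omega$-standard and $\omega$-nonstandard models. A pure proof-search definition is useless in an $\omega$-standard model, where every proof code is standard and the nonstandard-fragment trick is unavailable; this is precisely why the paper (and the cited work) runs two separate processes $A$ and $B$ and then merges them into a single process $C$. Third, the consistency of the admissible infinitary theory---the step you correctly identify as the hard part---is not an afterthought but the entire point of the self-referential design: one argues that an inconsistency proof in the admissible fragment would itself be a valid certificate causing the stage to succeed differently, a contradiction. As written, your proposal names the right tools but omits the two structural devices (descending certificates and the $\omega$-standard/$\omega$-nonstandard dichotomy) without which the construction does not close.
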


The theorem exhibits in statement (2) what I call the resurrection phenomenon, by which the theory of an inner model is realized in an end extension. For example, if we start with a model with a measurable cardinal, but then perform forcing to collapse this cardinal, thereby ruining it, we can nevertheless find an end extension to a model of $\ZFC+V=L[\mu]$, which has a measurable cardinal again and even the fine structure of $L[\mu]$. And more generally, any statement true in some inner model can become true again in an end extension. I find it incredible and wonderfully mysterious.

\section{The universal $\Sigma_{m+1}$ definable sequence}

I should like now to provide the set-theoretic generalization of theorem \ref{Theorem.Universal-algorithm-Sigma-m}, which will be a technical improvement on the main results of \cite{HamkinsWoodin:The-universal-finite-set} and \cite{HamkinsWilliams2021:The-universal-finite-sequence}, continuing in the set-theoretic progression of theorems  \ref{Theorem.Sigma_2-universal} and \ref{Theorem.Sigma_1-universal}. Specifically, I shall provide a $\Sigma_{m+1}$-definable finite sequence, with the universal extension property with respect to $\Sigma_m$-elementary end extensions. Ultimately I shall aim to use this theorem to generalize the pointwise-definable end extension argument of main theorem 1 to the case of set theory, proving main theorem 2.

\begin{theorem}\label{Theorem.Sigma_m-universal-ordinals}
For any c.e. theory $\ZFCbar$ extending $\ZFC$ and any natural number $m$, there is a $\Sigma_{m+1}$ definable finite sequence of ordinals
$$\alpha_0,\ \alpha_1,\ \ldots,\ \alpha_n$$
with the following properties:
\begin{enumerate}
\item $\ZF$ proves that the sequence is finite, and it is empty in any transitive model of $\ZFCbar$.
\item If the sequence is $s$ in a countable model $M\satisfies\ZFCbar$, then for every finite sequence $t$ extending $s$ in $M$ there is a $\Sigma_m$ elementary end extension $M\elesub_{\Sigma_m} N\satisfies\ZFCbar$ such that the sequence is $t$ in the extension $N$. (When $m\geq 1$, this is therefore a top extension.)
\end{enumerate}
$$\begin{tikzpicture}[xscale=.8,scale=.5]
 \draw[thick,red,densely dotted,fill=Orange!20] (-2,4) -- (-3,6) -- node[below,scale=.7] {$N$} (3,6) -- (2,4);
 \draw[very thick,blue,fill=Yellow!30] (0,0) -- (-2,4) --  node[below,blue,scale=.7] {$M$} (2,4) -- cycle;
 \draw[thick,blue] (0,0) to[out=105,in=-130] (0,1.5) node[blue,circle,fill=blue,scale=.2,label={[label distance=-5pt]below right:$s$}] {};
 \draw[thick,densely dotted,red] (0,1.5) to[out=50,in=-85] (.25,2.55) node[red,circle,fill=red,scale=.2,label={[label distance=-3pt]185:$t$}] {};
\end{tikzpicture}$$
\end{theorem}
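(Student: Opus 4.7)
The plan is to generalize the constructions of Theorems \ref{Theorem.Sigma_1-universal} and \ref{Theorem.Sigma_2-universal}, in direct analogy to how Theorem \ref{Theorem.Universal-algorithm-Sigma-m} generalized the universal algorithm in arithmetic. The key is to use $\Sigma_m$-correct ordinals---ordinals $\theta$ with $V_\theta\prec_{\Sigma_m}V$---as the witnessing structures, in place of the transitive models (for $\Sigma_2$) or the inner models of $\ZFbar$ (for $\Sigma_1$) used in the earlier theorems. This ratchets the complexity of the definition up to $\Sigma_{m+1}$ and automatically supplies the $\Sigma_m$-elementarity needed in the end extension.

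First I would define the sequence as follows. A finite sequence of ordinals $\langle\alpha_0,\ldots,\alpha_n\rangle$ is declared to be \emph{the} sequence when there exists an ordinal $\theta$ with $V_\theta\prec_{\Sigma_m}V$ such that inside $V_\theta$ the stages of a canonical Barwise-style search deliver exactly these entries. Since $\Sigma_m$-correctness of $\theta$ is $\Pi_m$, the outer existential over $\theta$ makes the whole formula $\Sigma_{m+1}$. The search procedure inside $V_\theta$ is specified by a self-reference step (via the usual diagonal lemma for set-theoretic formulas, exactly as in Woodin's original argument for Theorem \ref{Theorem.Sigma_2-universal}), so that the defining condition can refer to the nonexistence of further witnessing $\Sigma_m$-correct ordinals above $\theta$ extending the sequence. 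The upshot is that the sequence grows only when compelled by a new $\Sigma_m$-correct witness, and the definition is arranged so that this compulsion can be toggled in end extensions.

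Next I would verify the basic properties and the universal extension property. Finiteness follows because each newly appended entry demands a strictly larger $\Sigma_m$-correct witness, of which only finitely many can be strung together before the search exhausts itself in a given model. Emptiness in transitive models of $\ZFCbar$ follows by the standard argument: any witnessing $\theta$ inside a transitive model is a witness in $V$ itself, contradicting the fixed-point arrangement that makes $V$ produce nothing. For the extension property, given countable $M\satisfies\ZFCbar$ with sequence $s$ and $t\in M$ extending $s$, I would use Barwise compactness over an admissible cover of $M$, following the template of Hamkins--Williams for Theorem \ref{Theorem.Sigma_1-universal}. The relevant infinitary theory contains $\ZFCbar$, the full $\Sigma_m$-elementary diagram of $M$, an assertion of a new $\Sigma_m$-correct $\dot\theta$ above $\Ord^M$ in which the search delivers $t$, and diagonalizing axioms preventing any $\Sigma_m$-correct structure above $\dot\theta$ from extending $t$. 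Each $M$-finite fragment is verified consistent inside $M$ by reflecting down to a $\Sigma_m$-correct $\theta^*\in M$ that realizes the fragment. Barwise compactness then yields the desired model $N$. That $N$ is automatically a top extension when $m\geq 1$ follows because $V_\alpha$ is $\Delta_1$-definable from $\alpha$, so under $\Sigma_1$-elementarity no new sets can appear at old ranks.

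The main obstacle I anticipate is the self-referential construction and verifying that it actually blocks extension of the sequence in $N$ while keeping complexity at $\Sigma_{m+1}$. The definition must anticipate its own continuations through further $\Sigma_m$-correct witnesses, and the clause ``no later $\Sigma_m$-correct $\theta'$ produces a proper extension of $t$'' must be folded into the outer existential quantifier without raising complexity---the trick, as in Woodin's $\Sigma_2$ argument, is that the quantification over potential $\theta'$ can be absorbed into the $\Sigma_{m+1}$ scope using the fixed point. Once the definition is in place, the finiteness, emptiness, and extendibility arguments are structurally parallel to the $m=0,1$ cases, though one must track carefully how the various $\Sigma_m$-correct structures interact with the $\Sigma_m$-elementary diagram during the Barwise compactness step.
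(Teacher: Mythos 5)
Your overall architecture is the right one and matches the paper's: the witnessing structures are $\Sigma_m$-correct rank-initial segments $V_\delta$, the $\Pi_m$ complexity of ``$V_\delta$ is $\Sigma_m$-correct'' under one outer existential gives $\Sigma_{m+1}$, and the self-reference is handled by the \Godel--Carnap fixed-point lemma. But there is a genuine gap at the finiteness requirement, which is not a side condition but the load-bearing part of the construction. You claim finiteness because ``each newly appended entry demands a strictly larger $\Sigma_m$-correct witness, of which only finitely many can be strung together''---this is false: there is a proper class of $\Sigma_m$-correct cardinals in any model of $\ZFC$, so requiring the $\delta_n$ to increase bounds nothing, and $\ZF$ would not prove your sequence finite. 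The paper's definition attaches to each successful stage a \emph{strictly descending} auxiliary index, and this forces a split into two processes merged at the end: in $\omega$-nonstandard models (process $A$) each stage carries a natural number $k_n$, descending below all earlier ones, indexing a fragment $\ZFCbar_{k_n}$ of the theory whose models witness the non-extendability condition; in $\omega$-standard models (process $B$) one cannot use nonstandard fragments, so each stage instead carries a descending countable ordinal $\lambda_n$, the rank of the relevant $\Pi^1_1$ non-extendability assertion (about the real coding $V_{\delta_n}$ in the collapse forcing extension) in the tree representation of $\Pi^1_1$ statements. Descent of $k_n$ or $\lambda_n$ is what makes the sequence provably finite, and the reflection argument showing these indices must be nonstandard is also what gives emptiness in transitive models; your version of that argument is too vague to substitute for it. Your single uniform definition, with no descending index and no $\omega$-standard/$\omega$-nonstandard dichotomy, does not deliver clause (1).

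Your extension step also diverges from the paper in a way that creates trouble. The paper does not use Barwise compactness here at all: since the first unsuccessful stage $n$ fails, \emph{every} suitable $V_\delta$ already has, internally, a covering $\Sigma_m$-elementary end extension to a model of the relevant theory placing $\alpha$ at stage $n$; one then applies Keisler--Morley to get $M\elesub M^+$, takes $\delta$ to be $\Sigma_m$-correct in $M^+$ above $M$, and the internal end extension of $V_\delta^{M^+}$ is the desired $N$ with $M\elesub_{\Sigma_m}N$. Your plan to run Barwise compactness over the admissible cover with ``the full $\Sigma_m$-elementary diagram of $M$'' as part of the theory hits the obstacle that for $m\geq 1$ this diagram is not $\Delta_1$-definable over the admissible cover, so the Barwise compactness theorem does not directly apply to your infinitary theory; and the consistency of the finite fragments together with your diagonalizing axioms is exactly the point at which the descending-index machinery would be needed anyway. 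The Keisler--Morley route sidesteps both problems and is the approach you should adopt.
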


\begin{proof}
It will suffice to verify the adding-one extension property, where $t=s\concat\alpha$ extends the sequence $s$ by adding just one ordinal. The reason is that if there is a definable sequence with the adding-one extension property, then by interpreting those ordinals as encoding finite sequences to be concatenated, we can derive a new definable sequence with the universal extension property as stated.

As in both~\cite{HamkinsWoodin:The-universal-finite-set,HamkinsWilliams2021:The-universal-finite-sequence}, I shall describe two set-theoretic processes, $A$ and $B$. These processes are intended to be run inside, respectively, $\omega$-nonstandard models and $\omega$-standard models and will then be merged in a way that provides a single definition fulfilling the desired properties. We may assume $m\geq 1$, since the $\Sigma_1$ definable universal finite sequence of \cite{HamkinsWilliams2021:The-universal-finite-sequence} already fulfills the case $m=0$, as every end extension is $\Sigma_0$-elementary. Fix a specific enumeration of the $\ZFCbar$ theory, and let $\ZFCbar_k$ refer to the theory arising from the first $k$ enumerated axioms.

\textbf{Process $A$.} Let me start by describing process $A$, intended to be run inside $\omega$-nonstandard models as an internal process, using whatever nonstandard natural numbers are to be found there. The process proceeds in a sequence of stages, placing one object onto the sequence at each successful stage. Stage $n$ succeeds and $\alpha_n$ is defined, if there is a $\Sigma_m$-correct cardinal $\delta_n$, larger than $\alpha_n$ and all earlier $\delta_i$, and a natural number $k_n$, smaller than all earlier $k_i$, such that the structure $\<V_{\delta_n},\in>$ has no covering $\Sigma_m$-elementary end extension to a model $\<N,\in^N>$ satisfying $\ZFCbar_{k_n}$, inside which process $A$ places this very ordinal $\alpha_n$ onto the sequence at stage $n$ as the next and last element. Note that the earlier stages of the sequence are absolute to $V_{\delta_n}$ since it is correct about $\Sigma_m$-correctness and about whether a given $V_{\delta_i}$ has a $\Sigma_m$-elementary covering end extension to a model of a certain theory. In slogan form: we place an ordinal onto the sequence, if we find a $\Sigma_m$ correct rank initial segment of the universe having no covering $\Sigma_m$-elementary end extension in which we would have done so as the next and last element. The process officially accepts and uses the triple $(\delta_n,\alpha_n,k_n)$ occurring lexically least. In other words, we first minimize $\delta_n$, such that for some $\alpha_n<\delta_n$ there is such a $k_n$ for which the requirement is fulfilled.\goodbreak

The map $n\mapsto\alpha_n$, I claim, is $\Sigma_{m+1}$ definable. To see this, note first that the property ``$V_\delta$ is $\Sigma_m$ correct'' has complexity $\Pi_m$, since it is equivalent to the assertion (using the universal $\Pi_m$-truth predicate) that every $\Pi_m$ assertion true in $V_\delta$ is actually true. The relation $y=V_\delta$ is $\Pi_1$ in $y$ and $\delta$, and to say that $V_\delta$ has no end extension of the required form is a further $\Pi_1$ assertion. Thus, to verify $\alpha_n$, we ask for the existence of a cardinal $\delta$, an ordinal $\alpha$, a natural number $k$, and a set $y$ such that $y=V_\delta$ and $y$ is $\Sigma_m$ correct, such that $y$ has no end extension to a structure satisfying a certain theory, and all lexically smaller $(\delta',\alpha',k')$ do give rise to such extensions (and indeed, they do so inside $V_\delta$). This is altogether an existential followed by a $\Pi_m$ assertion, and hence has complexity $\Sigma_{m+1}$.

Although the definition may at first appear circular---we define $\alpha_n$, after all, by reference to the definition of the process $A$ sequence inside $N$---one may use the \Godel--Carnap fixed point lemma to find a definition $\psi(n,\alpha)$ for the map $n\mapsto \alpha_n$ that solves this recursion, allowing $\psi$ as described to refer to its own \Godel\ code $\gcode{\psi}$. The same method is used in~\cite{HamkinsWoodin:The-universal-finite-set,HamkinsWilliams2021:The-universal-finite-sequence}, and one should view this as analogous to the common use of the Kleene recursion theorem in computability-theoretic arguments, including its use in the universal algorithm~\cite{Woodin2011:A-potential-subtlety-concerning-the-distinction-between-determinism-and-nondeterminism, Hamkins:The-modal-logic-of-arithmetic-potentialism}.

Since the natural numbers $k_n$ are descending, there will be only finitely many successful stages, and so the sequence will be finite.

I claim that in the relevant models, every $k_n$ arising from a successful stage is nonstandard; in particular, $\omega$-standard models (those having no nonstandard natural numbers) will have no successful process $A$ stages. To see this, consider any countable model of set theory $M\satisfies\ZFCbar$, and let $n$ be the last successful stage in $M$. For any standard $k$, the theory $\ZFCbar_k$ reflects to some $\Sigma_{m+1}$ correct cardinal $\delta$ in $M$ above $\delta_n$. Consequently, $\<V_\delta^M,\in^W>\satisfies \ZFCbar_k$ and thinks object $\alpha_n$ was added at stage $n$ as the last object on the sequence, since this is true in $M$. Furthermore, $V_\delta^M$ is a $\Sigma_m$-elementary end extension of $V_{\delta_n}^M$, as both are $\Sigma_m$-correct in $M$. If $k_n \leq k$, this would violate the success of stage $n$ in $M$, since $V_{\delta_n}$ would have had such a topped end extension after all. Therefore $k<k_n$ for every standard $k$, and so $k_n$ is nonstandard. In particular, in any transitive model of $\ZFCbar$, the sequence will have no successful stages.

It remains to verify the extension property. Let $M$ be an $\omega$-nonstandard countable model of $\ZFCbar$ in which the sequence is $s$. Let $n$ be the first unsuccessful stage.
Let $k$ be nonstandard, but smaller than all previous $k_i$. Since stage $n$ did not succeed, it follows that for any ordinal $\alpha$ and any $\Sigma_m$ correct cardinal $\delta$ above $\alpha$ and all earlier $\delta_i$, the structure $\<V_\delta,\in>^M$ does have a $\Sigma_m$-elementary topped end extension in $M$ to a model satisfying $\ZFCbar_k +$``ordinal $\alpha$ was placed onto the sequence at stage $n$, the last successful stage,'' since otherwise stage $n$ would have succeeded. By the Keisler-Morley theorem, there is an elementary end extension $M\elesub M^+$, and so $n$ is also the first unsuccessful stage in $M^+$ and the two models agree on the outcome of the earlier stages. Let $\delta$ be a $\Sigma_m$ correct cardinal of $M^+$ above $M$. Thus, by what I've just explained, for every ordinal $\alpha<\delta$, in particular every ordinal $\alpha$ in $M$, the structure $\<V_\delta,\in>^{M^+}$ has a $\Sigma_m$ elementary covering end extension $\<N,\in^N>\satisfies\ZFCbar_k$ in which the $A$ sequence adds the ordinal $\alpha$ at stage $n$ as the last successful stage. Since $\delta$ is $\Sigma_m$ correct in $M^+$, it is a $\Sigma_m$-elementary end extension of $M$, and so $N$ is a $\Sigma_m$-elementary end extension of $M$ realizing $\alpha$ as the next and last element on the sequence, as desired.

\textbf{Process $B$.} Let me turn now to process $B$, intended to be run as an internal process inside $\omega$-standard models, using whatever (possibly nonstandard) ordinals are to be found there. The process proceeds in a sequence of stages, just as before, with each successful stage adding one ordinal $\alpha_n$ to the sequence. Stage $n$ is successful and $\alpha_n$ is defined, if there is a $\Sigma_m$ correct cardinal $\delta_n$ above $\alpha_n$ and all earlier $\delta_i$ and a countable ordinal $\lambda_n$, smaller than all earlier $\lambda_i$, such that the structure $\<V_{\delta_n},\in>^M$ has no covering $\Sigma_m$-elementary end extension to a model $\<N,\in^N>$ satisfying $\ZFCbar+$``process $B$ places ordinal $\alpha_n$ on the sequence at stage $n$, the last successful stage,'' and furthermore, it has no such end extension even in the forcing extension of $M$ collapsing $V_{\delta_n}$ to become countable, and finally, the assertion about the real coding $V_{\delta_n}$ and $\alpha_n$ that there is no such end extension in that forcing extension, which is a $\Pi^1_1$ statement, has rank $\lambda_n$ in the canonical representation of $\Pi^1_1$-assertions by well-founded trees.

Once again, the circularity in the definition can be removed by the \Godel--Carnap fixed-point lemma. Since the ordinals $\lambda_n$ are descending, there will be only finitely many successful stages, and so the sequence is finite.

And once again, the map $n\mapsto \alpha_n$ is $\Sigma_{m+1}$ definable, since one asks for a cardinal $\delta_n$ and ordinals $\alpha_n$ and $\lambda_n$, and a set $y$, such that $y=V_\delta$ is $\Sigma_m$ correct, and furthermore such that it is forced that this set has no end extension in the collapse forcing to a model of a certain theory, and further the rank of that $\Pi^1_1$ assertion is $\lambda_n$. Except for the $\Sigma_m$ correctness, which has complexity $\Pi_m$, the rest of this can be verified inside any sufficiently large rank-initial segment of the universe and hence has complexity $\Sigma_2$, making the complexity $\Sigma_{m+1}$ altogether.

I claim that the ordinals $\lambda_n$ in the relevant models are nonstandard. Suppose that $M$ is a countable model of $\ZFCbar$  and let $n$ be the last successful stage in $M$. This stage was successful because there was a $\Sigma_m$ correct cardinal $\delta_n$ and ordinal $\alpha_n$, such that in the forcing extension collapsing $V_{\delta_n}$ the structure $\<V_{\delta_n},\in>^M$ had no $\Sigma_m$-elementary covering end extension to a model $\<N,\in^N>$ in which the $B$ sequence added $\alpha_n$ as the next and final stage, and such that the rank of this assertion is $\lambda_n$. But since $M$ itself is such a covering $\Sigma_m$ elementary end extension of $V_{\delta_n}^M$ in which $\alpha_n$ was added as the last element of the sequence, the statement that there is no such end extension is not actually true. Thus, the $\Pi^1_1$ assertion in the forcing extension of $M$ making $V_{\delta_n}$ countable is not actually true, even though this model thought it was true. Since this is an $\omega$-standard model, the tree it builds to represent this $\Pi^1_1$ statement is the same as we would build external to $M$, and so the tree is not actually well-founded, although it was found to have rank $\lambda_n$ inside the model. So $\lambda_n$ must be nonstandard. Consequently, the sequence is empty in any transitive model of $\ZFCbar$.

\enlargethispage{25pt}
Let me finally prove the extension property for process $B$ in countable $\omega$-standard models $M\satisfies\ZFCbar$. Suppose the sequence is $s$ in $M$. Let $n$ be the first unsuccessful stage, and consider any ordinal $\alpha$ in $M$. By the Keisler--Morley theorem, there is a countable elementary end extension of $M$ to a model $M^+$. Let $\delta$ be a $\Sigma_m$ correct cardinal in $M^+$ above $M$, and let $M^+[G]$ be a forcing extension in which $V_\delta^{M^+}$ is made countable. If $\<V_\delta,\in>^{M^+}$ has a $\Sigma_m$-elementary covering end extension to a model of $\ZFCbar$ in which ordinal $\alpha$ is placed onto the sequence at stage $n$ as the last successful stage, then we'd be done, since this would also serve as the desired end extension of the original model $M$. So let me assume toward contradiction that there is no such end extension of $\<V_\delta,\in>^{M^+}$. This is a $\Pi^1_1$-assertion about the generic real coding $V_\delta^{M^+}$ and $\alpha$ in $M^+[G]$, and this $\Pi^1_1$ statement therefore has some ordinal rank $\lambda$ there in the representation of $\Pi^1_1$-assertions by well-founded trees. Since the statement is really true (in the set-theoretic background of our universe), it follows that $\lambda$ must be in the well-founded part of $M^+[G]$. In particular, $\lambda<\lambda_i$ all $i<n$, since those ordinals are nonstandard. But this implies that stage $n$ would have been successful in $M^+$, contrary to assumption. So $V_\delta^{M^+}$ must have had the desired end extension after all, and so I've verified the extension property of process $B$ for the relevant countable $\omega$-standard models of set theory.

{\bf Process C.} I shall now merge processes $A$ and $B$ into a single process $C$, providing a single uniform $\Sigma_{m+1}$-definition that will work in all the relevant countable models of set theory. Process $C$ proceeds in stages. At each successful stage, it will place one new object onto the sequence, either for an $A$-reason or for a $B$-reason. The $A$-reasons will involve data $(\delta_n,\alpha_n,k_n)$ as in process $A$, and the $B$-reasons will involve data $(\delta_n,\alpha_n,\lambda_n)$ as in process $B$, and in each case we shall insist that $\delta_n$ is a $\Sigma_m$ correct cardinal above all the other data and the earlier $\delta_i$ and furthermore that $k_n$ is below all earlier $A$-reason stage $k_i$, if this is an $A$-reason stage, and $\lambda_n$ is below all earlier $B$-reason stage $\lambda_i$, if this is a $B$-reason stage. The $A$ reason stages are successful, if $\<V_{\delta_n},\in>$ has no covering $\Sigma_m$-elementary end extension to a model $\<N,\in^N>\satisfies\ZFCbar_{k_n}$ in which process $C$ adds $\alpha_n$ as the next and last element; the $B$ reason stages are successful, if $\<V_{\delta_n},\in>$ has no covering $\Sigma_m$-elementary end extension to a model $\<N,\in^N>\satisfies\ZFCbar$ in which process $C$ places $\alpha_n$ as the next and last element on the sequence, and furthermore there is no such extension even in the forcing extension making $V_{\delta_n}$ countable and the rank of this $\Pi^1_1$ assertion is $\lambda_n$. Once there has been a successful $A$-reason stage, then we proceed further only with $A$-reason stages.

To complete the argument, we observe that the analysis of processes $A$ and $B$ goes through for process $C$. The map $n\mapsto\alpha_n$ is $\Sigma_{m+1}$ definable for similar reasons as in processes $A$ and $B$. The sequence is finite since the $k_n$ and $\lambda_n$ can go down only finitely many times. At any $A$-reason stage, the number $k_n$ must be nonstandard, and at any $B$-reason stage, the ordinal $\lambda_n$ must be nonstandard, and so the sequence is empty in any transitive model of $\ZFCbar$. In any $\omega$-nonstandard model, we achieve the extension property for process $C$ for the same reasons as process $A$; and in any $\omega$-standard model, we achieve the extension property for process $C$ just as with process $B$.
\end{proof}

In the previous theorem, the universal sequence consists of ordinals only. In the case that we have a model of $V=\HOD$, however, every object is definable as the $\alpha$th element for some ordinal $\alpha$. Thus we can achieve a form of the universal finite sequence for arbitrary objects.

\begin{theorem}\label{Theorem.Sigma_m-universal-arb-sets}
For any c.e. theory $\ZFCbar$ extending $\ZFC+V=\HOD$ and any natural number $m$, there is a $\Sigma_{m+1}$ definable finite sequence of sets
$$a_0,\ a_1,\ \ldots,\ a_n$$
with the following properties:
\begin{enumerate}
\item $\ZF$ proves that the sequence is finite, and it is empty in any transitive model of $\ZFCbar$.
\item If the sequence is $s$ in a countable model $M\satisfies\ZFCbar$, then for every finite sequence $t$ extending $s$ in $M$ there is a $\Sigma_m$ elementary end extension $M\elesub_{\Sigma_m} N\satisfies\ZFCbar$ such that the sequence is $t$ in the extension $N$.
\end{enumerate}
\end{theorem}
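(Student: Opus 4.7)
The plan is to reduce Theorem~\ref{Theorem.Sigma_m-universal-arb-sets} to Theorem~\ref{Theorem.Sigma_m-universal-ordinals} by exploiting the canonical definable well-ordering of the universe available under $V=\HOD$. The case $m=0$ I would handle separately by appealing directly to Theorem~\ref{Theorem.Sigma_1-universal} of Hamkins--Williams, which already provides a $\Sigma_1$-definable finite sequence of sets with the covering end-extension property for any c.e.\ theory extending \ZF, including our $\ZFCbar$.

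For $m \geq 1$, I would first invoke Theorem~\ref{Theorem.Sigma_m-universal-ordinals} applied to $\ZFCbar$ to obtain a $\Sigma_{m+1}$-definable finite sequence of ordinals $\alpha_0, \ldots, \alpha_n$ having the universal extension property for $\Sigma_m$-elementary end extensions. Under $V=\HOD$ there is a canonical definable bijection $\alpha \mapsto a_\alpha$ between the ordinals and the universe---namely, the enumeration induced by the canonical $\HOD$ well-ordering---and the relation ``$y = a_\alpha$'' is $\Sigma_2$-definable. I would then define the desired set sequence in any model $M \satisfies \ZFCbar$ to be $a_{\alpha_0}^M, \ldots, a_{\alpha_n}^M$. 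Because $m+1 \geq 2$, composing the $\Sigma_{m+1}$-definition of the ordinal sequence with the $\Sigma_2$-definition of $a_\alpha$ still yields a $\Sigma_{m+1}$-definition of the set sequence. Finiteness and emptiness in transitive models of $\ZFCbar$ transfer immediately from the corresponding properties of the underlying ordinal sequence.

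To verify the extension property, suppose $M \satisfies \ZFCbar$ is countable with set sequence $s = a_{\alpha_0}^M, \ldots, a_{\alpha_n}^M$ and that $t = s \concat b_0 \cdots b_j$ is any finite extension of $s$ in $M$. Letting $\beta_i$ be the unique ordinal with $b_i = a_{\beta_i}^M$, the sequence $t^* = \alpha_0, \ldots, \alpha_n, \beta_0, \ldots, \beta_j$ extends the ordinal sequence of $M$, and by Theorem~\ref{Theorem.Sigma_m-universal-ordinals} there is a $\Sigma_m$-elementary end extension $N \satisfies \ZFCbar$ whose ordinal sequence is exactly $t^*$. The main obstacle---and the step I would most worry about---is ensuring that the canonical $\HOD$ well-ordering of $N$ agrees with that of $M$ on the sets of $M$, so that the set sequence computed from $t^*$ in $N$ is actually $t$ rather than some other sequence. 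This is where the fact that $N$ is a \emph{top} extension of $M$ (guaranteed by $m \geq 1$) becomes essential: since $V_\gamma^M = V_\gamma^N$ for every $\gamma \in \ORD^M$ and the $\HOD$ well-ordering restricted to sets of rank below $\gamma$ is definable from $V_\gamma$, we obtain $a_\beta^N = a_\beta^M$ for all $\beta \in \ORD^M$. Hence the set sequence in $N$ is exactly $t$, as required.
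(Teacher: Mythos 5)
Your proposal is correct and follows essentially the same route as the paper: reduce to Theorem~\ref{Theorem.Sigma_m-universal-ordinals} by composing the universal ordinal sequence with the canonical $\Delta_2$-definable $\HOD$ enumeration $\alpha\mapsto a_\alpha$. Your two additional points of care---handling $m=0$ by direct appeal to Theorem~\ref{Theorem.Sigma_1-universal}, and using the top-extension property $V_\gamma^M=V_\gamma^N$ to see that the $\HOD$ enumeration of $N$ agrees with that of $M$ on the sets of $M$---are precisely the details the paper's one-paragraph proof leaves implicit, and the latter is indeed the point on which the reduction hinges.
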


\begin{proof}
If $V=\HOD$ is part of the theory $\ZFCbar$, then this theorem is a consequence of theorem \ref{Theorem.Sigma_m-universal-ordinals}, since from the universal finite sequence of ordinals, we can use the definable well-ordering of the universe to pick out for each ordinal $\alpha$ the $\alpha$th set in that definable order. The $\alpha$th element of $\HOD$ is $\Delta_2$ definable from $\alpha$, and so from the definable ordinal sequence we extract the sequence of definable sets. And since we have the universal extension property for $\Sigma_m$-elementary end extensions for the ordinal sequence, we can thereby put any desired set object as the next element on the derived sequence, thereby establishing this theorem.
\end{proof}

Corollary \ref{Corollary.V=HOD-required} of the main theorem will show that the $V=\HOD$ part of this theorem cannot be dropped. Specifically, it is not possible to provide a definable universal finite sequence with the universal extension property for $\Sigma_m$-elementary end extensions in the countable models of a theory that does not prove $V=\HOD$.

\section{Pointwise-definable end extensions in set theory}

I am now able to prove main theorem 2 by applying the underlying method of theorem 1, but doing so in set theory with the generalized version of the universal finite sequence provided by theorem \ref{Theorem.Sigma_m-universal-ordinals}.

\begin{theorem}[Main theorem 2]\label{Theorem.Main-theorem-2-numbered}
  Every countable model of \ZF\ has a pointwise-definable end extension satisfying $\ZFC+V=L$.
\end{theorem}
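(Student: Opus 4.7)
The plan is to adapt the proof of Main Theorem~1 to the set-theoretic setting, using the universal finite sequences of Theorems~\ref{Theorem.Sigma_1-universal} and~\ref{Theorem.Sigma_m-universal-arb-sets} in place of the universal $\Sigma_{m+1}$-algorithm. The one genuinely new wrinkle is that the arbitrary-set version of the universal sequence requires $V=\HOD$ in the ambient model, whereas the starting model $M_0$ satisfies only $\ZF$; this is handled by using Theorem~\ref{Theorem.Sigma_1-universal} at the first step, which needs only that the target theory hold in an inner model.

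Fix a countable model $M_0\satisfies\ZF$, and note that $L^{M_0}$ is an inner model of $\ZFC+V=L$. Applying Theorem~\ref{Theorem.Sigma_1-universal} with $\ZFbar=\ZFC+V=L$, I would obtain a covering end extension $M_0 \elesub_{\Sigma_0} M_1\satisfies \ZFC+V=L$ in which the $\Sigma_1$-definable universal sequence ends with any chosen element $a_0\in M_0$, thereby making $a_0$ definable in $M_1$ as the last entry of that sequence. From stage $n\geq 1$ onward, the model $M_n$ satisfies $\ZFC+V=L$ and hence $V=\HOD$, so Theorem~\ref{Theorem.Sigma_m-universal-arb-sets} applies with $\ZFCbar=\ZFC+V=L$ and $m=n$ to yield a countable $\Sigma_n$-elementary end extension $M_n \elesub_{\Sigma_n} M_{n+1}\satisfies\ZFC+V=L$ in which the $\Sigma_{n+1}$-definable universal sequence terminates with any prescribed $a_n\in M_n$. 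Iterating produces a progressively elementary tower
$$M_0 \elesub_{\Sigma_0} M_1 \elesub_{\Sigma_1} M_2 \elesub_{\Sigma_2} M_3 \elesub_{\Sigma_3} \cdots$$
of countable models, each satisfying $\ZFC+V=L$ from $M_1$ onward.

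Letting $M^*=\Union_n M_n$, the progressive elementarity of the chain makes each $M_n$ a $\Sigma_n$-elementary substructure of $M^*$, and since every axiom of $\ZFC+V=L$ has some fixed complexity, $M^*\satisfies \ZFC+V=L$. As in Main Theorem~1, any later extension $M_k\elesub_{\Sigma_k} M_{k+1}$ with $k\geq n+1$ is at least $\Sigma_{n+1}$-elementary, which freezes the $\Sigma_{n+1}$-definable universal sequence introduced at stage $n+1$; consequently $a_n$ persists as the last entry of that sequence throughout the tower and in $M^*$, rendering $a_n$ definable there. A standard bookkeeping schedule---enumerating all elements ever appearing in some $M_k$ and ensuring each is eventually chosen as some $a_n$---then makes $M^*$ pointwise definable.

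The principal obstacle is bridging the gap between $\ZF$ (which $M_0$ satisfies) and $\ZFC+V=\HOD$ (which is required to invoke Theorem~\ref{Theorem.Sigma_m-universal-arb-sets}). This is resolved at the very first step by invoking Theorem~\ref{Theorem.Sigma_1-universal}, whose hypothesis needs only an inner model of the target theory. After this single transition into the $V=L$ regime, the remainder of the construction is a routine translation of the arithmetic argument: the freezing of the $\Sigma_{n+1}$-definable universal sequence under $\Sigma_{n+1}$-elementary end extensions is already built into the universality clause of Theorem~\ref{Theorem.Sigma_m-universal-arb-sets}, via the descending-parameter mechanism of processes $A$, $B$, $C$ from the proof of Theorem~\ref{Theorem.Sigma_m-universal-ordinals}, and the bookkeeping then delivers pointwise definability in the limit.
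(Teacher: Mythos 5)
Your proposal is correct and follows essentially the same route as the paper: a first application of Theorem~\ref{Theorem.Sigma_1-universal} (using the inner model $L$) to enter the $\ZFC+V=L$ regime, followed by a progressively elementary tower built from the $\Sigma_{m+1}$-definable universal sequences with bookkeeping to exhaust the limit model. The only cosmetic difference is that the paper factors the tower step through the ordinal-valued sequence of Theorem~\ref{Theorem.Sigma_m-universal-ordinals} to first obtain a Paris model and then invokes $V=\HOD$, whereas you call the arbitrary-set version (Theorem~\ref{Theorem.Sigma_m-universal-arb-sets}) directly---but that theorem is itself derived from the ordinal version via $V=\HOD$, so the arguments coincide.
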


A somewhat more general version of the theorem is the following:

\begin{theorem}
 Every countable model of \ZF\ with an inner model of a c.e. theory $\ZFCbar$ that includes $V=\HOD$ has a pointwise-definable end extension satisfying $\ZFCbar$.
\end{theorem}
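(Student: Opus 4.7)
The plan is to mirror the proof of Main Theorem 1, substituting the set-theoretic universal sequence theorems for the arithmetic universal algorithm. Fix a countable model $M \models \ZF$ with inner model $W \models \ZFCbar$, where $\ZFCbar \supseteq \ZFC + V=\HOD$ is a c.e. theory. Set $M_0 = M$, and fix a bookkeeping scheme listing targets $a_0, a_1, a_2, \ldots$ so that each element of each $M_k$ (to be constructed) eventually appears on the list; since each $M_k$ will be countable, a standard diagonal enumeration suffices.

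First I would handle the jump from $\ZF$ to $\ZFCbar$. Apply Theorem \ref{Theorem.Sigma_1-universal} to $M_0$ and the inner model $W$: this yields a covering end extension $M_0 \subseteq_{\mathrm{end}} M_1 \models \ZFCbar$ in which the $\Sigma_1$-definable universal sequence has been extended so that $a_0$ is its last entry, making $a_0$ definable in $M_1$ without parameters. From this point on, I remain inside models of $\ZFCbar \supseteq \ZFC + V=\HOD$, so I can iteratively apply Theorem \ref{Theorem.Sigma_m-universal-arb-sets}: at stage $n \geq 1$, take $m = n$ to obtain a $\Sigma_n$-elementary (hence top) end extension $M_n \prec_{\Sigma_n} M_{n+1} \models \ZFCbar$ in which the $\Sigma_{n+1}$-definable universal finite sequence of sets is extended so that $a_n$ is its last entry. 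The crucial preservation fact is that $\Sigma_n$-elementarity freezes the previously-constructed universal sequences of lower definitional complexity: no new successful stage of a $\Sigma_{k+1}$-definable sequence (for $k < n$) can appear in a $\Sigma_n$-elementary extension, so each $a_i$ with $i < n$ remains the last entry of its defining sequence in every later model.

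Form the limit $M^* = \bigcup_n M_n$. Since the chain is progressively elementary, each $M_n$ is $\Sigma_n$-elementary in $M^*$, so every axiom of $\ZFCbar$---having some fixed complexity $\Sigma_n$ and holding in $M_n$---holds in $M^*$. Every $a_n$ is definable in $M^*$ as the last element of the $\Sigma_{n+1}$-definable universal finite sequence, since this is true in $M_{n+1}$ and preserved upward. The bookkeeping ensures every element of $M^*$ appears as some $a_n$, so $M^*$ is pointwise definable. Finally, $M_0 \subseteq_{\mathrm{end}} M^*$ because the first step is a covering end extension and each subsequent step is a top extension.

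The main obstacle, and the reason this argument splits into an initial step plus an iteration, is precisely the passage from a model of $\ZF$ to a model of $\ZFCbar$: we need the resurrection phenomenon, which is supplied only by Theorem \ref{Theorem.Sigma_1-universal}, and only at the level of (covering) end extensions rather than top extensions. A secondary point to verify carefully is the preservation claim for the lower-level universal sequences under $\Sigma_n$-elementarity; this is routine given the stated properties of the universal sequences, since each $\Sigma_{k+1}$-definable sequence is finite and its successful stages are witnessed by absolute $\Sigma_k$-type data once $k \leq n$.
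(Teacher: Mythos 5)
Your proof is correct and follows essentially the same route as the paper: first apply theorem \ref{Theorem.Sigma_1-universal} to end-extend the given model of \ZF\ to a model of $\ZFCbar$, then build a progressively $\Sigma_n$-elementary tower of end extensions, using the universal sequences with bookkeeping so that every element becomes and remains definable in the limit. The only cosmetic difference is that you invoke theorem \ref{Theorem.Sigma_m-universal-arb-sets} directly at each stage, whereas the paper runs the tower with the ordinal sequence of theorem \ref{Theorem.Sigma_m-universal-ordinals} to obtain a Paris model and then applies $V=\HOD$ once at the end---these amount to the same argument.
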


This theorem in turn is a consequence of the following somewhat more refined version, dropping the $V=\HOD$ requirement, but obtaining still a \emph{Paris} model, one in which every ordinal is definable without parameters (see \cite{Enayat2005:ModelsOfSetTheoryWithDefinableOrdinals}). The point is that every Paris model of $V=\HOD$ is pointwise definable, since the ordinal parameters are themselves definable.

\begin{theorem}
  Every countable model of \ZF\ with an inner model of a c.e. theory $\ZFCbar$ has an end extension to a model of $\ZFCbar$ that is a Paris model.
\end{theorem}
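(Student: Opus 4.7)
The plan is to adapt the tower construction of main theorem \ref{Theorem.Main-theorem-1-numbered} to the set-theoretic setting, with theorem \ref{Theorem.Sigma_m-universal-ordinals} playing the role of theorem \ref{Theorem.Universal-algorithm-Sigma-m}. Let $M$ be a countable model of $\ZF$ with an inner model $W \satisfies \ZFCbar$. The first step is to pass from $M$ into a covering end extension $M_0 \satisfies \ZFCbar$ using theorem \ref{Theorem.Sigma_1-universal}, with $\ZFCbar$ playing the role of $\ZFbar$; this is precisely the resurrection step that exploits the inner-model hypothesis. At the same time, I would use this step to place a chosen ordinal $\alpha_0 \in M$ as the final element of the $\Sigma_1$-definable universal sequence in $M_0$.

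From $M_0$ I would recursively build a progressively elementary tower
$$M_0\quad\elesub_{\Sigma_0}\quad M_1\quad\elesub_{\Sigma_1}\quad M_2\quad\elesub_{\Sigma_2}\quad M_3\quad\elesub_{\Sigma_3}\quad\cdots$$
of countable models of $\ZFCbar$. At stage $n \geq 1$, given $M_n$ and a target ordinal $\alpha_n \in M_n$ supplied by a bookkeeping enumeration, I apply theorem \ref{Theorem.Sigma_m-universal-ordinals} at complexity level $m = n$ to obtain a $\Sigma_n$-elementary end extension $M_{n+1} \satisfies \ZFCbar$ in which $\alpha_n$ is placed as the next and last element of the $\Sigma_{n+1}$-definable universal sequence of ordinals. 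The bookkeeping is arranged so that every ordinal appearing in any $M_k$ is eventually served as some $\alpha_n$, which is routine since each $M_k$ is countable.

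Let $M^* = \bigcup_n M_n$. Because the tower is progressively elementary, each $M_n$ is $\Sigma_n$-elementary in $M^*$, which therefore satisfies $\ZFCbar$ and end-extends $M$. To verify that $M^*$ is a Paris model, I would observe that the statement ``the $n$-th element of the $\Sigma_{n+1}$-definable universal sequence of ordinals is $\alpha_n$'' is a $\Sigma_{n+1}$ assertion, true in $M_{n+1}$ by construction, and every later model in the tower is at least $\Sigma_{n+1}$-elementary over $M_{n+1}$, so this definition persists up to $M^*$. Hence each $\alpha_n$ is definable without parameters in $M^*$, and by the bookkeeping every ordinal of $M^*$ is some such $\alpha_n$.

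The main conceptual obstacle, just as in the arithmetic case, is coordinating the two interlocking complexity hierarchies: the complexity of the universal sequence invoked at stage $n$ is $\Sigma_{n+1}$, whereas the elementarity of the extension $M_n \elesub_{\Sigma_n} M_{n+1}$ is only $\Sigma_n$. The construction succeeds precisely because once $\alpha_n$ has been witnessed on the $\Sigma_{n+1}$-sequence in $M_{n+1}$, all subsequent extensions are at least $\Sigma_{n+1}$-elementary over $M_{n+1}$ and thus preserve this witness into the limit. A secondary point requiring verification is that the $\Sigma_n$-elementary end extension produced by theorem \ref{Theorem.Sigma_m-universal-ordinals} automatically freezes the already-committed lower-complexity universal sequences, so that the earlier definitions of $\alpha_0, \ldots, \alpha_{n-1}$ remain intact; this follows from $\Sigma_n$-elementarity implying $\Sigma_k$-elementarity for all $k \leq n$.
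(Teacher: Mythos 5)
Your proposal follows the paper's own route exactly: first use the $\Sigma_1$-definable universal sequence of theorem \ref{Theorem.Sigma_1-universal} to resurrect $\ZFCbar$ in a covering end extension $M_0$ of $M$, then run the progressively elementary tower of theorem \ref{Theorem.Sigma_m-universal-ordinals} over $M_0$ and take the union, with bookkeeping making every ordinal the last element of some $\Sigma_{n+1}$-definable universal sequence.

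One step as you describe it does not work, although it is ultimately harmless. You place $\alpha_0$ as the final element of the $\Sigma_1$-definable sequence of theorem \ref{Theorem.Sigma_1-universal} during the resurrection step $M\to M_0$, but your next step $M_0\elesub_{\Sigma_0}M_1$ is merely an end extension, and end extensions do not freeze a $\Sigma_1$-definable sequence---the whole point of the universal extension property is that such sequences can grow under end extension. So you have no guarantee that $\alpha_0$ is still the last element in $M_1$, hence none that this particular definition of $\alpha_0$ survives to $M^*$. Contrast this with the tower stages, where the fact that $\alpha_n$ is last on the $\Sigma_{n+1}$-definable sequence is established in $M_{n+1}$ and every subsequent extension is at least $\Sigma_{n+1}$-elementary, so the definition genuinely persists. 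The fix is already contained in your own construction: drop the flourish and let the bookkeeping serve the ordinals of $M_0$ (including $\alpha_0$) at later tower stages, which is what the paper does. Two smaller points: your recursion as stated begins at $n\geq 1$ and so never actually constructs $M_1$ from $M_0$; and the persistent definition of $\alpha_n$ in $M^*$ is ``the last element of the $\Sigma_{n+1}$-definable universal sequence,'' not ``the $n$-th element,'' since that sequence may already have nonstandard length before you extend it by one.
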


This theorem can be proved by first appealing to theorem \ref{Theorem.Sigma_1-universal} to get any end extension of the original model to a model of $\ZFCbar$, and then applying the following theorem to find a further extension to a pointwise-definable model.

\begin{theorem}\label{Theorem.Sigma_m-elementary-end extension} Let $m$ be any particular finite natural number.
 \begin{enumerate}
   \item Every countable model $M$ of a c.e. theory $\ZFCbar$ extending $\ZFC$ has a $\Sigma_m$ elementary end extension
     $$M\elesub_{\Sigma_m} N$$
     to a Paris model $N\satisfies\ZFCbar$, that is, one in which every ordinal is definable without parameters.
   \item Consequently, every countable model $M$ of a c.e. theory $\ZFCbar$ extending $\ZFC+V=\HOD$ has a $\Sigma_m$-elementary end extension $M\elesub_{\Sigma_m}N$ to a pointwise-definable model $N\satisfies\ZFCbar$.
 \end{enumerate}
\end{theorem}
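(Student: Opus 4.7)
The plan is to carry out, within set theory, the tower-of-end-extensions argument used in the proof of theorem \ref{Theorem.Main-theorem-1-numbered}, substituting the universal $\Sigma_{m+n+1}$-definable finite sequence of ordinals supplied by theorem \ref{Theorem.Sigma_m-universal-ordinals} for the arithmetic universal algorithm. Starting from a countable model $M = M_0\satisfies\ZFCbar$, I would fix a bookkeeping enumeration of ordinal targets $\alpha_0,\alpha_1,\ldots$ that, through the usual interleaving, covers every ordinal eventually appearing in any $M_k$ of the tower. At stage $n$, given $M_n$ and the chosen target $\alpha_n\in M_n$, I would invoke theorem \ref{Theorem.Sigma_m-universal-ordinals} at level $m+n$ with theory $\ZFCbar$ to find a countable $\Sigma_{m+n}$-elementary end extension $M_{n+1}\satisfies\ZFCbar$ in which the $\Sigma_{m+n+1}$-definable universal sequence of ordinals is extended so as to end with $\alpha_n$. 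The result is the progressively elementary tower
$$M_0\ \elesub_{\Sigma_m}\ M_1\ \elesub_{\Sigma_{m+1}}\ M_2\ \elesub_{\Sigma_{m+2}}\ M_3\ \elesub_{\Sigma_{m+3}}\ \cdots.$$

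Let $N = \bigcup_n M_n$. A level-by-level Tarski union-of-elementary-chain argument gives $M_0\elesub_{\Sigma_m} N$ and $N\satisfies\ZFCbar$, since each first-order axiom of $\ZFCbar$ has some fixed complexity and is therefore transferred from eventually all $M_n$ into $N$; moreover $N$ end-extends $M_0$ because end extensions compose along the chain. The key preservation observation, exactly parallel to the ``frozen'' step in theorem \ref{Theorem.Main-theorem-1-numbered}, is that once $\alpha_n$ is placed as the last element of the $\Sigma_{m+n+1}$-definable universal sequence in $M_{n+1}$, every subsequent step of the tower is at least $\Sigma_{m+n+1}$-elementary, so the $\Sigma_{m+n+1}$-definable universal sequence does not grow any further; it remains ending with $\alpha_n$ in each $M_k$ for $k\geq n+1$ and therefore in the limit $N$. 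Hence each $\alpha_n$ is definable in $N$ without parameters as the last element of the $\Sigma_{m+n+1}$-definable universal finite sequence of ordinals, and the bookkeeping ensures that every ordinal of $N$ arises as some $\alpha_n$, so $N$ is a Paris model of $\ZFCbar$, establishing statement~(1). For statement~(2), if $\ZFCbar\supseteq\ZFC+V=\HOD$, then every set of $N$ is the $\alpha$th element of the canonical $\Delta_2$-definable well-ordering of $\HOD$ for some ordinal $\alpha$; since every such $\alpha$ is definable in $N$ without parameters by (1), every set of $N$ is definable without parameters, and $N$ is pointwise definable. Alternatively, one may apply theorem \ref{Theorem.Sigma_m-universal-arb-sets} directly to target arbitrary sets in the tower.

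The main obstacle is justifying the freezing claim invoked above: that once the $\Sigma_{m+n+1}$-definable universal sequence ends with $\alpha_n$ in $M_{n+1}$, it does not continue past $\alpha_n$ in any $\Sigma_{m+n+1}$-elementary extension. This is the set-theoretic analogue of the ``operation is frozen'' step in the proof of theorem \ref{Theorem.Main-theorem-1-numbered}, and it must be extracted from the internal structure of the construction in theorem \ref{Theorem.Sigma_m-universal-ordinals}: the elementarity of the tail of the tower is tuned to the definability level of the sequence precisely enough to prevent any further $A$-reason or $B$-reason stage from succeeding past $\alpha_n$. A smaller auxiliary point is arranging the bookkeeping so that ordinals which enter the tower only at later stages still get targeted; this is handled by the standard trick of deferring targets until they appear and then interleaving them with fresh ones.
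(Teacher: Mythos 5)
Your proposal is correct and is essentially the paper's own argument: the same progressively elementary tower built from theorem \ref{Theorem.Sigma_m-universal-ordinals}, the same preservation-by-elementarity of each universal sequence, the same bookkeeping to exhaust the ordinals, and the same $V=\HOD$ step for statement (2). The one point you flag as a remaining obstacle---the freezing of the $\Sigma_{m+n+1}$-definable sequence---needs nothing from the internal structure of the construction: since ``stage $j$ succeeds with ordinal $\alpha$'' is a $\Sigma_{m+n+1}$ assertion about parameters in $M_{n+1}$, it is absolute to any $\Sigma_{m+n+1}$-elementary extension, which is exactly how the paper dispatches it.
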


\begin{proof}
Fix any countable model $M_0$ of $\ZF$. By theorem \ref{Theorem.Sigma_m-universal-ordinals}, we can find a $\Sigma_m$-elementary end extension model $M_1$ where the last entry of the universal $\Sigma_{m+1}$-sequence is any desired ordinal $\alpha_0$ of $M_0$. Continuing iteratively just as in theorem \ref{Theorem.Main-theorem-1-numbered}, we apply theorem \ref{Theorem.Sigma_m-universal-ordinals} again to find a $\Sigma_{m+1}$-elementary countable end extension $M_2$, such the last element on the universal $\Sigma_{m+2}$-sequence is any desired ordinal $\alpha_1$ of $M_1$. Note that the $\Sigma_{m+1}$-elementarity means that we have preserved the previous $\Sigma_{m+1}$ definable universal sequence, and so $\alpha_0$ is still last on that sequence. And so on. At each stage $n$, we find by theorem \ref{Theorem.Sigma_m-universal-ordinals} a $\Sigma_{m+n}$ elementary end extension $M_{n+1}$ of $M_n$, such that any desired ordinal $\alpha_n$ from $M_n$ is last on the $\Sigma_{m+n+1}$-definable universal sequence, and this will be preserved to the later models. The result is an progressively elementary tower of countable models:
 $$M_0\quad\elesub_{\Sigma_m}\quad M_1\quad\elesub_{\Sigma_{m+1}}\quad M_2\quad\elesub_{\Sigma_{m+2}}\quad M_3\quad\elesub_{\Sigma_{m+3}}\quad M_4\quad\elesub_{\Sigma_{m+4}}\quad \cdots$$
Because the tower is progressively elementary, each model $M_n$ will be $\Sigma_{m+n}$ elementary in the union model $M^*=\Union_n M_n$, which therefore will be a model of the desired theory $\ZFCbar$, as well as $\Sigma_m$-elementary extension of the original model $M_0$. Every ordinal $\alpha_n$ is definable in $M^*$ as the last element on the $\Sigma_{m+n+1}$-definable universal sequence, since this is true in $M_{n+1}$ and preserved by all the later extensions. Therefore, since these are all countable models, a simple bookkeeping method will suffice for us to arrange that the elements $\alpha_n$ exhaust the ordinals of $M^*$, which will therefore be the desired target Paris model, fulfilling statement (1).

For statement (2), we simply observe that every Paris model of $V=\HOD$ is pointwise definable, since every object is definable with ordinal parameters, but the ordinals themselves are definable without parameters.
\end{proof}

\begin{corollary}\label{Corollary.V=HOD-required}
 If a theory $\ZFCbar$ supports universal definable finite sequence of arbitrary sets with the universal extension property with respect to $\Sigma_m$-elementary end extensions of countable models of $\ZFCbar$, for every $m$, then it proves $V=\HOD$.
\end{corollary}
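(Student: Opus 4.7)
The plan is to run the tower construction of theorem \ref{Theorem.Sigma_m-elementary-end extension} directly with the hypothesized universal sequences of \emph{arbitrary} sets, thereby bypassing the intermediate appeal to ordinal sequences combined with $V=\HOD$. Starting from any countable $M_0\satisfies\ZFCbar$ and any target starting level $m\geq 1$, I would iteratively apply the hypothesized universal extension property: at stage $n$, find a $\Sigma_{m+n}$-elementary end extension $M_n\elesub_{\Sigma_{m+n}}M_{n+1}\satisfies\ZFCbar$ in which a chosen set $a_n\in M_n$ is placed last on the $\Sigma_{m+n+1}$-definable universal sequence of sets guaranteed by the hypothesis at that level. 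Because each step has elementarity at least as high as the definitional complexity of the previous sequence, every previously targeted $a_k$ remains last on its corresponding sequence in all later models and in the limit $M^\ast=\Union_n M_n$. A standard countable bookkeeping pass ensures that the chosen $a_n$ exhaust $M^\ast$, so $M^\ast$ is a pointwise-definable $\Sigma_m$-elementary end extension of $M_0$ satisfying $\ZFCbar$.

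Next I would invoke the well-known fact that every pointwise-definable model satisfies $V=\HOD$, since each element, being definable without parameters, is in particular ordinal-definable. The crucial auxiliary observation is that $V=\HOD$ is a sentence of bounded syntactic complexity, expressible as $\forall x\,\exists\alpha\,(x\in\HOD_\alpha)$ with $\HOD_\alpha$ uniformly $\Delta_2$-definable, and so it sits at a fixed low level of the Lévy hierarchy. Choosing the starting level $m$ large enough that $V=\HOD$ is within $\Sigma_m$, the $\Sigma_m$-elementarity of $M_0\elesub_{\Sigma_m}M^\ast$ reflects $V=\HOD$ from $M^\ast$ back down to $M_0$. Since $M_0$ was arbitrary, every countable model of $\ZFCbar$ satisfies $V=\HOD$; by Löwenheim-Skolem, every model of $\ZFCbar$ has a countable elementary submodel, which therefore satisfies $V=\HOD$, and by elementarity every model of $\ZFCbar$ satisfies $V=\HOD$. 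Completeness then yields $\ZFCbar\vdash V=\HOD$.

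The main obstacle I expect is verifying that the tower construction genuinely yields a pointwise-definable end extension in this setting, since theorem \ref{Theorem.Sigma_m-elementary-end extension} as proved used universal sequences of ordinals together with $V=\HOD$ to pick an arbitrary set as the $\alpha$th element of the definable well-order. Here, however, the hypothesis hands us universal sequences of arbitrary sets directly, so each target $a_n$ can be placed on the sequence without routing through an ordinal index, and $V=\HOD$ is never used inside the construction at all---it enters only at the end, as the bounded-complexity sentence that the pointwise-definable limit must satisfy and that consequently reflects down to witness $V=\HOD$ in $M_0$. A minor companion point worth double-checking is the precise Lévy complexity of $V=\HOD$ needed to make the reflection step work, but this is a routine verification independent of the construction.
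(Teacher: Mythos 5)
Your proposal is correct and follows essentially the same route as the paper: run the tower construction of the main theorem directly with the hypothesized universal sequences of arbitrary sets to produce a pointwise-definable (hence $V=\HOD$) limit model, then pull $V=\HOD$ back to $M_0$ using the bounded L\'evy complexity of that sentence (the paper pins it down as $\Pi_3$, so $\Sigma_2$-elementarity already suffices) and conclude via L\"owenheim--Skolem and completeness. The one point you flag for double-checking is exactly the one the paper settles, and your ``choose $m$ large enough'' hedge handles it harmlessly.
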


\begin{proof}
 Suppose that for every $m$ we had a definable universal finite sequence with the universal extension property for adding any particular set to the sequence in some $\Sigma_m$-elementary end extension of a given countable model of $\ZFCbar$. Let $M_0$ be any countable model of $\ZFCbar$. Using the method of main theorem 2 (theorem \ref{Theorem.Main-theorem-2-numbered}), we can build a progressively elementary tower of models
  $$M_0\elesub_{\Sigma_{m_0}}M_1\elesub_{\Sigma_{m_1}}M_2\elesub_{\Sigma_{m_2}}\cdots \elesub_{\Sigma_2} M$$
 where $m_0\geq 2$ and at each stage we impose a degree of elementarity $\Sigma_{m_n}$ exceeding the complexity of the definitions of the previous sequences, and where furthermore by bookkeeping we arrange that the union model $M$ is pointwise definable. Since the tower is progressively elementary, the limit model $M$ is a model of $\ZFCbar$. Since it is pointwise definable, it satisfies $V=\HOD$. Since $M_0\elesub_{\Sigma_2} M$, it follows that $M_0$ is also a model of $V=\HOD$, since this statement has complexity $\Pi_3$. So every model of $\ZFCbar$ must be a model of $V=\HOD$, and the corollary is proved.
 \end{proof}

This is a sense in which $V=\HOD$ is required for the $\Sigma_m$-elementary universal extension property.

\section{Application to $\Sigma_m$-elementary potentialism}

Let me briefly discuss the consequences of the $\Sigma_m$-elementary universal finite sequence for $\Sigma_m$-elementary potentialism. We consider the collection of countable models of \ZFC\ under the relation of $\Sigma_m$-elementary end extensions. (These are all top extensions when $m\geq 1$.) This is a potentialist system in the sense of \cite{HamkinsLinnebo2022:Modal-logic-of-set-theoretic-potentialism, HamkinsWoodin:The-universal-finite-set, HamkinsWilliams2021:The-universal-finite-sequence}, and supports the modal semantics by which $\possible\psi$ is true in a model $M$ if there is some $\Sigma_m$-elementary end extension $M\elesub_{\Sigma_m}N$ for which $N\satisfies\psi$, and $\necessary\psi$ is true in $M$ if $\varphi$ is true in all such $N$. A statement $\varphi(p_1,\ldots,p_n)$ of propositional modal logic is \emph{valid} for this potentialist system, with respect to a language of substitution instances, if $\varphi(\psi_1,\ldots,\psi_n)$ holds for the potentialist modal semantics for all such substitution instances. A central concern with any potentialist system is to discover the modal validities, and here I do this for the potentialist system at hand.

\begin{theorem}
 The modal validities true in the countable models of \ZFC\ under $\Sigma_m$-elementary end extensions form exactly the modal theory $\theoryf{S4}$.
\end{theorem}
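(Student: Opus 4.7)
The proof has two parts: soundness of $\theoryf{S4}$ (every $\theoryf{S4}$-theorem is a modal validity) and completeness (no non-$\theoryf{S4}$ modal formula is a modal validity). For soundness, observe that the $\Sigma_m$-elementary end extension relation between countable models of $\ZFC$ is reflexive (every model is its own such extension) and transitive (the composition of two $\Sigma_m$-elementary end extensions is again one). These two properties validate the $T$ axiom $\necessary\varphi \to \varphi$ and the $4$ axiom $\necessary\varphi \to \necessary\necessary\varphi$ respectively; combined with the distribution axiom $K$ and the necessitation rule, both immediate from the Kripke semantics, this gives every theorem of $\theoryf{S4}$ as a modal validity under any substitution.

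For completeness, my plan is to invoke the Kripke completeness of $\theoryf{S4}$ with respect to finite rooted reflexive transitive frames, indeed finite reflexive trees: it suffices to show that for every such finite tree $F = \<W, R, v\>$ refuting a given non-$\theoryf{S4}$ formula $\varphi(p_1,\ldots,p_k)$ at its root $w_0$, there is a countable model $M \satisfies \ZFC$ and set-theoretic sentences $\psi_1,\ldots,\psi_k$ such that $\varphi(\psi_1,\ldots,\psi_k)$ fails at $M$ in the potentialist semantics. I would use the universal $\Sigma_{m+1}$-definable finite sequence provided by theorem~\ref{Theorem.Sigma_m-universal-ordinals} as a control mechanism: fix an injective coding of the nodes of $F$ as finite sequences of ordinals, with the root coded by the empty sequence and each non-root node coded by its parent's code extended by a fresh ordinal (different fresh ordinals for different children). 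Let $\psi_i$ assert that the longest initial segment of the current universal $\Sigma_{m+1}$-definable sequence which codes a node of $F$ in fact codes a node $w$ with $v(w, p_i)$ true. The crucial observation is that under any $\Sigma_m$-elementary end extension $M \to N$, the universal sequence in $M$ is preserved as an initial segment of the one in $N$, since the statement ``$\alpha$ is the $i$th entry of the sequence'' is $\Sigma_{m+1}$ and hence preserved upward. Thus the ``current node'' computed in $N$ either coincides with the one in $M$ or is an $R$-descendant thereof; by theorem~\ref{Theorem.Sigma_m-universal-ordinals}, every $R$-child of the current node is reachable via some suitable $\Sigma_m$-elementary end extension; and distinct children are reached by incompatible extensions with no common further extension (since a common extension would have to place two different ordinals in the same sequence slot), realizing the branching of $F$ within the potentialist system. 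These properties together give a bisimulation between $F$ rooted at $w_0$ and the fragment of the potentialist system reachable from an $M$ chosen to code $w_0$, whence $\varphi(\psi_1,\ldots,\psi_k)$ inherits the failure of $\varphi$ at $w_0$.

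The main obstacle lies in controlling the behavior of the $\psi_i$ under \emph{arbitrary} $\Sigma_m$-elementary end extensions, not merely those supplied by the extension property of theorem~\ref{Theorem.Sigma_m-universal-ordinals}. This is handled by the upward preservation of $\Sigma_{m+1}$ statements: because the sequence in $M$ is literally an initial segment of the sequence in $N$, the longest valid-path prefix in $N$ extends the one in $M$, and by the fresh-ordinal coding it cannot ``jump'' to an unrelated branch without first passing through the intermediate ancestors. The remaining bookkeeping is to verify that the bisimulation transfers Kripke failures to modal failures in the potentialist system by a routine induction on modal complexity. Combined with soundness, this shows that the modal validities of the potentialist system of $\Sigma_m$-elementary end extensions among countable models of $\ZFC$ are exactly the theorems of $\theoryf{S4}$.
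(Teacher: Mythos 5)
Your proposal is correct and follows essentially the same route as the paper: soundness from the reflexivity and transitivity of the extension relation, and completeness by reducing to finite reflexive transitive trees and realizing each such frame via control statements read off from the universal $\Sigma_{m+1}$-definable sequence of theorem~\ref{Theorem.Sigma_m-universal-ordinals}, whose entries serve as instructions for climbing the tree. This is precisely the ``railyard labeling'' argument the paper cites from its predecessors, which you have merely spelled out in more detail.
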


\begin{proof}
The modal theory \theoryf{S4} is valid in any potentialist system. For the converse, it suffices as in \cite[theorem~30]{Hamkins:The-modal-logic-of-arithmetic-potentialism}, \cite[theorem~8]{HamkinsWoodin:The-universal-finite-set}, or \cite[theorem~11]{HamkinsWilliams2021:The-universal-finite-sequence}, to provide a railyard labelings of the finite tree pre-orders. This can be undertaken with the $\Sigma_{m+1}$-definable universal finite sequence, just as in those arguments with the analogous universal sequences. Namely, one interprets the objects on the universal sequence as instructions for climbing the finite tree preorder, and this will align $\Sigma_m$-elementary end-extensional possibility with accessibility in the tree preorder. It follows that the potentialist validities will be contained within the validities of the finite tree preorder frames, and this is \theoryf{S4}, as desired.
\end{proof}

\begin{theorem}
 No model of \ZF\ has a maximal $\Sigma_{m+1}$-theory (allowing natural number parameters), even with respect to fixing the $\Sigma_m$ diagram of the model. That is, new $\Sigma_{m+1}$ facts can become true about natural numbers in a $\Sigma_m$-elementary end extension.
\end{theorem}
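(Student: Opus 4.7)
The plan is to exhibit explicitly, for any countable model $M\satisfies\ZF$ and any natural number $m$, a single $\Sigma_{m+1}$ sentence with natural-number parameters that is false in $M$ but becomes true in some $\Sigma_m$-elementary end extension $N\supseteq M$. The witness is a single instance of the universal finite sequence formula provided by theorem~\ref{Theorem.Sigma_m-universal-ordinals} (or theorem~\ref{Theorem.Sigma_1-universal} in the case $m=0$, since every end extension is automatically $\Sigma_0$-elementary). Note that the surrounding discussion is about countable models, so I restrict to this case.

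First I would let $\varphi(n,\alpha)$ be the $\Sigma_{m+1}$ formula defining the universal finite sequence of ordinals from theorem~\ref{Theorem.Sigma_m-universal-ordinals}, taking as base theory whatever c.e.\ theory extending \ZFC\ (or \ZF) is appropriate for $M$. Let $s$ denote the value of this sequence inside $M$, and let $\ell\in\omega^M$ be its length (which may be nonstandard in $M$; this is harmless since the theorem's notion of natural-number parameters is $\omega^M$). Next, pick any natural number $k\in\omega^M$ at all. By the universal extension property of theorem~\ref{Theorem.Sigma_m-universal-ordinals}, applied with the target sequence $t=s\concat\<k>$, I obtain a $\Sigma_m$-elementary end extension $M\elesub_{\Sigma_m} N$ of the same theory in which the universal sequence is exactly $t$. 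Because end extensions of models of \ZF\ preserve $\omega$, the parameters $\ell$ and $k$ refer to the very same natural numbers in both $M$ and $N$.

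Consider then the single sentence $\sigma := \varphi(\ell,k)$, whose only parameters are the natural numbers $\ell$ and $k$ from $M$. In $N$ the sentence $\sigma$ is true, since $k$ is now precisely the $\ell$-th entry of the universal finite sequence there. But in $M$ the sentence $\sigma$ is false, because in $M$ the sequence has length exactly $\ell$ and therefore no $\ell$-th entry at all. Thus $\sigma$ is a new $\Sigma_{m+1}$ fact with natural-number parameters that becomes true in the $\Sigma_m$-elementary end extension $N$, so the $\Sigma_{m+1}$-theory of $M$ with natural-number parameters is not maximal even after fixing the $\Sigma_m$-diagram.

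There is no serious obstacle here, as the essential work is already packaged inside theorem~\ref{Theorem.Sigma_m-universal-ordinals}. The only delicacy is ensuring that the witnessing fact $\sigma$ is genuinely \emph{about natural numbers} rather than about arbitrary ordinals: this is handled by exploiting the flexibility of the universal extension property, which allows the appended entry to be any ordinal we please, in particular any natural number $k\in\omega^M$, so that both free parameters of $\sigma$ lie in $\omega^M$ as required.
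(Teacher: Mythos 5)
Your proposal is correct and follows essentially the same route as the paper: the paper's proof likewise derives the result immediately from the universal extension property of theorem~\ref{Theorem.Sigma_m-universal-ordinals}, observing that forcing one additional successful stage of the universal sequence in a $\Sigma_m$-elementary end extension constitutes a new $\Sigma_{m+1}$ fact with natural-number parameters. Your only addition is the (sensible) care taken to append a natural number $k$ as the new entry so that both parameters of the witnessing sentence $\varphi(\ell,k)$ lie in $\omega^M$, which matches the intent of the paper's one-line argument.
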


\begin{proof}
This is immediate from the $\Sigma_{m+1}$-definable universal extension property, since the universal sequence can be extended to have one additional successful stage, which is a new $\Sigma_{m+1}$ fact that becomes true in a $\Sigma_m$-elementary end extension.
\end{proof}

Meanwhile, a simple compactness maximizing argument on the theory produces models of \ZF\ with a maximal $\Sigma_{m+1}$ theory, when parameters are not allowed.

\section{Philosophical discussion}

Let me conclude with a few philosophical remarks. The \emph{toy model} perspective in set theory, as undertaken in \cite{Hamkins2012:TheSet-TheoreticalMultiverse, Hamkins2014:MultiverseOnVeqL, Hamkins2015:IsTheDreamSolutionToTheContinuumHypothesisAttainable, GitmanHamkins2010:NaturalModelOfMultiverseAxioms}, is the proposal that we might learn about the nature of set-theoretic possibility beyond $V$, a realm otherwise inaccessible to us, by studying the collection of countable models of set theory. The idea is that for all we know, the current set-theoretic universe $V$ in which we find ourselves might be a mere countable model inside some other much better universe $V^+$ that we cannot reach or fathom. In this way, the multiverse of $V$ becomes a part of the toy multiverse as it is seen from $V^+$, thus falling under the general analysis of such toy models. We can certainly observe this phenomenon abundantly amongst the smaller worlds to which we do have fuller access, where models that seem uncountable in some contexts are ultimately seen as countable in larger more powerful contexts. By studying the countable models as a toy simulacra of the larger multiverse in which we are truly interested, therefore, we may come to insight about it.

The toy model perspective reveals set theory as providing a vast generalization of the object-theory/meta-theory distinction---the models provide a hierarchy of metatheoretic contexts. Each model serves as a metatheoretic context for the models and theories available in it, and these various contexts can disagree about core metamathematical facts, such as questions about consistency, satisfiability, and consistency strength. In this way, metamathematical questions are realized as mathematical in the higher model. Questions about the set-theoretic universe might begin as sublime and philosophical, becoming metamathematical in connection with a specific model of set theory $M$, explicitly so in the metamathematical context provided by a model $M^+$ in which $M$ exists as a set, but ultimately seen as purely mathematical in the object theory of $M^+$. This model $M^+$ can itself be considered from the metatheoretic context provided by a still-higher model $M^{++}$, which may look upon both $M$ and $M^+$ as countable toy models. Ascending the hierarchy of metatheoretic contexts, we thus move from philosophy to metamathematics to mathematics to countable mathematics.

Large cardinal set theory often exhibits a certain potentialist nature not dissimilar to this. Namely, set theorists commonly specify a large cardinal theory without any suggestion of finality, but rather, freely adopt much stronger large cardinal hypotheses whenever the need arises. A set theorist may start with a proper class of Woodin cardinals, but then want a supercompact cardinal, or an extendible cardinal, or a proper class of superhuge cardinals, or what have you. The actual large cardinal theory in play thus has an upwardly extensible or potentialist aspect---it can become stronger at any moment for any reason. In this way, the standard use of large cardinals in set theory exhibits a potentialist character.

What the theorems of this article show is that certain set-theoretic features occur upwardly densely often in the toy multiverse of countable models. Namely, every countable model of \ZF\ has an end extension to a pointwise-definable model of $V=L$, and also to models that are not pointwise definable. The feature of being pointwise definable is thus a switch in the potentialist modal logic, a statement that can be turned on and off as one accesses larger worlds. In light of this, the toy model perspective thereby resurrects the math tea argument---shall we think of the full set-theoretic universe $V$ as being extendible to a pointwise-definable universe from the perspective of a still-larger metamathematical context?

\printbibliography

\end{document}